\documentclass[10pt,a4paper,dk,reqno]{amsart}
\let\amsmarkboth\markboth    
\usepackage{amsmath,amsfonts,amssymb,amsthm,amscd}
\usepackage[english]{babel}
\usepackage[mathscr]{eucal}
\usepackage{graphicx}
\usepackage{appendix}



\makeatletter
\let\markboth\amsmarkboth
\bbl@redefine\markboth#1#2{%
   \def\bbl@arg{#1}%
   \ifx\bbl@arg\@empty
     \toks@{}%
   \else
     \toks@{\noexpand\foreignlanguage{%
              \languagename}{%
              \noexpand\bbl@restore@actives#1}}%
   \fi
   \def\bbl@arg{#2}%
   \ifx\bbl@arg\@empty
     \toks8{}%
   \else
     \toks8{\noexpand\foreignlanguage{%
              \languagename}{%
              \noexpand\bbl@restore@actives#2}}%
   \fi
   \edef\bbl@tempa{\the\toks@}%
   \edef\bbl@tempb{\the\toks8}%
   \protected@edef\bbl@tempa{%
     \noexpand\org@markboth{\bbl@tempa}{\bbl@tempb}}%
   \bbl@tempa
}
\DeclareRobustCommand*\ams@disablelinebreak{\def\\{ \ignorespaces}}
\def\maketitle{\par
   \@topnum\z@ %
   \@setcopyright
   \thispagestyle{firstpage}%
   \uppercasenonmath\shorttitle
   \ifx\@empty\shortauthors \let\shortauthors\shorttitle
   \else \andify\shortauthors
   \fi
   \@maketitle@hook
   \begingroup
   \@maketitle
   \toks@\@xp{\shortauthors}\@temptokena\@xp{\shorttitle}%
   \protected@edef\@tempa{%
     \@nx\markboth{\ams@disablelinebreak
       \@nx\MakeUppercase{\the\toks@}}{\the\@temptokena}}%
   \@tempa
   \endgroup
   \c@footnote\z@
   \@cleartopmattertags
}

\makeatother



\numberwithin{equation}{section}


\DeclareMathOperator{\im}{{\Im\mathrm{m}}\,} \DeclareMathOperator{\re}{{
\Re\mathrm{e}}\,}


\newtheorem{theorem}{Theorem}[section]
\newtheorem{proposition}[theorem]{Proposition}
\newtheorem{lemma}[theorem]{Lemma}
\newtheorem{corollary}[theorem]{Corollary}

\newtheorem{remark}[]{Remark}
\theoremstyle{definition}




\renewcommand{\P}{\Psi}
\newcommand{\dt}{\partial_t}
\newcommand{\C}{\mathbb{C}}
\newcommand{\R}{\mathbb{R}}

\newcommand{\eps}{\varepsilon}


\title[]{Global existence and collisions\\ for symmetric configurations of nearly parallel vortex filaments}

\thanks{V.B. is partially
  supported by the ANR project ``R.A.S.''.} 

\author[]{Valeria Banica}
\email{Valeria.Banica@univ-evry.fr}
\date{\today}

\author[]{Evelyne Miot}
\email{Evelyne.Miot@math.u-psud.fr}

\begin{document}
\maketitle
\begin{abstract}We consider the Schr\"odinger system 
with Newton-type interactions that was derived by R.~Klein, A.~Majda and K.~Damodaran \cite{KMD} to modelize 
the dynamics of $N$ nearly parallel vortex filaments in a 3-dimensional homogeneous incompressible fluid. 
The known large time existence results are due to C.~Kenig, G.~Ponce and L.~Vega \cite{KPV} and concern the interaction of two filaments and 
particular configurations of three filaments.  In this article we prove large time existence results for particular configurations of four nearly parallel filaments and for a class of configurations of $N$ nearly parallel filaments for any $N\geq 2$. We also show the existence of 
travelling wave type dynamics. Finally we describe configurations leading to collision.

\end{abstract}

 \section{Introduction}

In this paper we study the dynamics of $N$
interacting vortex filaments in a 3-dimensional homogeneous incompressible fluid. 
We focus on filaments that are all nearly parallel to 
the $z$-axis. They are described by means of complex-valued functions 
$\Psi_j(t,\sigma)\in \C$, $1\leq j\leq N$, where 
$t\in \R$ is the time, $\sigma\in \R$ parameterizes the $z$-axis, and $\Psi_j(t,\sigma)$ is the position of the 
$j$-th filament. 
A simplified model for the dynamics of such nearly parallel filaments has been derived by 
R.~Klein, A.~Majda and K.~Damodaran \cite{KMD} in the form of the following 1-dimensional Schr\"odinger system of equations
\begin{equation}
 \label{syst:interaction}
\begin{cases}
\displaystyle 
i\dt \Psi_j+ \Gamma_j \partial_\sigma^2 \Psi_j +\sum_{k\neq j} 
\Gamma_k \frac{\Psi_j-\Psi_k}{|\Psi_j-\Psi_k|^2}=0,\quad1\leq j\leq N,\\
\Psi_j(0,\sigma)=\Psi_{j,0}(\sigma). 
\end{cases}
\end{equation}
Here  $\Gamma_j$ is a real number representing the circulation of the j-th filament\footnote{The free Schr\"odinger operator derived in \cite{KMD} is actually 
$i\partial_t +\alpha_j \Gamma_j \partial_\sigma^2$, where $\alpha_j$ is 
another vortex core parameter related to the $j$-th filament. For 
simplicity we assume throughout the paper that $\alpha_j=1$.}.   
In the case where $\Psi_j(t,\sigma)=\Psi_j(t)=X_j(t)$ are exactly parallel filaments, 
Syst. \eqref{syst:interaction} reduces to 
the well-known point vortex system arising in 2-dimensional homogeneous incompressible fluids
\begin{equation}
 \label{syst:point-vortex}
\begin{cases}
\displaystyle 
i\frac{d X_j}{dt}+\sum_{k\neq j} \Gamma_k \frac{X_j-X_k}{|X_j-X_k|^2}=0,\quad 1\leq j\leq N,\\
X_j(0)=X_{j,0}. 
\end{cases}
\end{equation}
The Syst. \eqref{syst:interaction} combines on the one hand the linearized self-induction 
approximation for each vortex filament, given by the linear Schr\"odinger equation, and on the other 
hand the interaction of the filaments, for any $\sigma$, by the point vortex system. 
Solutions of the simplified model \eqref{syst:interaction} have remarkable 
mathematical and physical properties, as described in \cite{MaBe}. 
The main issue in this context is 
the possibility of collision of at least two of the filaments in finite time at some point $\sigma$.  

Before presenting the known results on nearly parallel vortex filaments let us briefly review some classical
facts on the point vortex system \eqref{syst:point-vortex}. Its dynamics preserves the center of inertia
$\sum_j \Gamma_j X_j(t)$, the angular momentum $\sum_j \Gamma_j |X_j(t)|^2$  and the quantities 
$$\sum_{j\neq k} \Gamma_j \Gamma_k  \ln \left|X_j(t)-X_k(t)\right|^2\,\,,\,\,
\sum_{j\neq k}\Gamma_j \Gamma_k \left|X_j(t)-X_k(t)\right|^2.$$In case of circulations having all the 
same signs this implies that no collision among the vortices can occur in finite time. Therefore 
there exists a unique global $\mathcal C^1$ solution $\left(X_j(t)\right)_j$ to \eqref{syst:point-vortex}. 
For $N=2$ global existence still holds 
independently of the circulation signs since $|X_1(t)-X_2(t)|$ remains constant. 
When dealing with more than two vortices the single-sign assumption of the circulations 
really matters - explicit examples of configurations leading to collapse in finite time 
have been given by self-similar shrinking triangles \cite{Aref79}. For any circulations  the equilateral triangle is a rotating or translating configuration, 
and for identical circulations the ends and the middle of a segment form 
also a relative equilibrium configuration. For $N\geq 4$ and identical circulations $\Gamma_j=\Gamma\,\forall j$, 
vertices of regular polygons also form 
relative equilibrium configurations. They rotate around the center of inertia  
 with constant angular velocity $\omega=\Gamma(N-1)/(2R^2)$, where $R$
is the size of the polygon. These polygon configurations are stable if 
and only if $N\leq 7$. The proof of this result, conjectured by Kelvin in 1878, 
was recently completed by L.~G.~Kurakin and V.~I.~Yudovitch in 2002 \cite{KuYu} (see also \cite{No}). Finally, the configuration formed by adding to an $N$-polygon configuration
 one point of arbitrary circulation $\Gamma_0$ 
at the center of inertia, is an relative equilibria rotating with constant angular velocity 
$\omega=[\Gamma(N-1)+2\Gamma_0]/(2R^2)$. A 
natural observation to be done is that as $N$ increases the dynamics gets more and more sophisticated. 

A first result on nearly parallel vortex filaments has been given in \cite{KMD}. 
The authors proved that for $N=2$ the linearized system 
around the exactly parallel filaments solution of \eqref{syst:point-vortex} is stable if the circulations have the same sign and unstable otherwise. 
Moreover, they made numerical simulations suggesting 
global existence for \eqref{syst:interaction} in the first case and collision in finite time in the second case. 
Their first conjecture on global existence was proved then by C.~Kenig, G.~Ponce and L.~Vega \cite{KPV} for filaments $\Psi_j$
obtained as small $H^1$ perturbations of exactly 
parallel filaments $X_j$,
\begin{equation}
\label{hyp:pert}
 \Psi_j(t,\sigma)=X_j(t)+u_j(t,\sigma),\quad 1\leq j\leq N.
\end{equation}
More precisely, it has been proved in \cite{KPV} that for $u_j(0)$ sufficiently 
small in $H^1(\mathbb R)$ -and therefore in $L^\infty(\mathbb R)$- global existence 
and uniqueness of the solution to Syst. \eqref{syst:interaction} hold for all vortex solutions $(X_j)_j$
of equal circulations and such that $|X_j(t)-X_k(t)|=d $ for $1\leq j\neq k\leq N$.  The only such possible 
configurations are $N=2$ with any pair $(X_1,X_2)$, and $N=3$ with $(X_1,X_2,X_3)$ an equilateral triangle.
Moreover, local existence und uniqueness hold for any number $N$ of filaments and any circulations $\Gamma_j$ and 
the solution exists at least up to times of order $ |\ln \sum_j\|u_j(0)\|_{H^1}|$.

Finally, let us mention that P.-L.~Lions and A.~Majda \cite{LM} developed an equilibrium statistical theory for nearly parallel filaments using the approximation given by Syst. \eqref{syst:interaction}.

\medskip

The purpose of this article is to study other specific configurations of 
vortex filaments. In order to obtain large time existence results  we will strongly use the symmetry properties 
of the configuration of the straight filaments $(X_j)_j$ in itself, and  
those of the perturbation $(u_j)_j$ on the other hand.

\medskip
In the first part of this paper we focus on the case where $N\geq 3$ and $(X_j)_j$ is a 
regular rotating polygon of radius $1$ with $N$ vertices, with or without its center.  The
index $j=0$ refers to the center of the polygon and $1\leq j\leq N$ to the vertices of the polygon. Since 
\eqref{syst:interaction} is invariant under translations, we can suppose that the center 
of inertia of the polygon is set at the origin, i.e. $X_0(t)=0$ for all $t$. We shall impose that the circulations in the vertices have the same value $\Gamma$ and that $\omega$ has the same sign as $\Gamma$. For simplicity we consider 
$$\Gamma_j=1,\quad 1\leq j\leq N.$$ 
In the cases where the center of the polygon is not considered, the angular speed $\omega$ is $(N-1)/2$, hence positive. In the cases when the center of the polygon is considered, the circulation $\Gamma_0$ must be larger than $-(N-1)/2$.

We will consider very specific perturbations of the configuration $(X_j)_j$,
assuming that all the perturbations are the same for each of the straight filaments, a dilation 
combined with a rotation. More precisely we shall focus on solutions having the form
\begin{equation}\label{hyp:pert-N}
 \Psi_j(t,\sigma)=X_j(t)\Phi(t,\sigma),
\end{equation}
with $\Psi(t,\sigma)$ close to $X_j(t)$ in some sense as $|\sigma|\rightarrow\infty$. Let 
us notice that this dilation-rotation type of perturbations keeps the symmetry of the polygon 
for all $(t,\sigma)$. A natural example of such perturbations are the ones 
with $\Phi-1$ small in $H^1(\R)$.  Our result below allows to handle a larger class of 
perturbations of the 
regular rotating polygon, including also for example all small constant rotations of the polygon.

\begin{theorem}
 \label{thm:main-2}Let $N\geq 3$ and $(X_j)_j$ be the equilibrium solution 
given by a regular rotating polygon of radius $1$, with or without its center, with $\Gamma_j=1$ for $1\leq j\leq N$ and positive angular velocity $\omega$.
Assume that 
$$\Psi_{j,0}(\sigma)=X_{j,0}\Phi_0(\sigma),$$ with $\Phi_0$ such that 
\begin{equation*}
 \mathcal E(\Phi_0)= \frac{1}{2}\int |\partial_\sigma \Phi_0|^2
+\frac{\omega}{2}\int \left(|\Phi_0|^2-1-\ln |\Phi_0|^2\right)
\end{equation*}
satisfies $\mathcal E(\Phi_0)\leq \eta_1$, where $\eta_1$ is an absolute constant\footnote{introduced in Lemma \ref{lemma:ginzburg} 
below.}. 
Then 
there exists a unique global solution $(\Psi_j)_j$ of \eqref{syst:interaction}, with this initial datum, such that
$$\Psi_{j}(t,\sigma)=X_{j}(t)\Phi(t,\sigma),\quad t\in \R$$
  with
$\Phi-\Phi_0\in C\left(\R,H^1(\R)\right)$. Moreover
$$\frac 34
\leq \frac{|\Psi_j(t,\sigma)-\Psi_k(t,\sigma)|}{|X_j(t)-X_k(t)|}\leq \frac 54,\quad t,\sigma\in \R.$$ 
In particular, if 
$\Phi_0(\sigma)\overset{|\sigma|\rightarrow\infty}{\longrightarrow} 1$ then $\Psi_j(t,\sigma)\overset{|\sigma|\rightarrow\infty}{\longrightarrow} X_j(t)$ $\forall t$, and 
if $\Phi_0\in 1+H^1(\R)$ then $\Psi_j-X_j\in C\left(\R,H^1(\R)\right)$.

\end{theorem}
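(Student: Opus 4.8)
The plan is to reduce the $N$-filament system \eqref{syst:interaction} under the ansatz $\Psi_j(t,\sigma)=X_j(t)\Phi(t,\sigma)$ to a single scalar Schr\"odinger equation for $\Phi$, and then to run a global well-posedness argument for that scalar equation driven by the conserved energy $\mathcal E$. First I would substitute the ansatz into \eqref{syst:interaction}. Using that $(X_j)_j$ is a relative equilibrium of the point-vortex system \eqref{syst:point-vortex} rotating with angular velocity $\omega$, so that $i\dot X_j = -\sum_{k\neq j}\Gamma_k (X_j-X_k)/|X_j-X_k|^2 = -\omega X_j$ (after the normalization $|X_j|=1$, $\Gamma_j=1$), one computes
\begin{equation*}
i\dt\Psi_j = i\dot X_j\,\Phi + iX_j\,\dt\Phi = -\omega X_j\Phi + iX_j\,\dt\Phi,
\end{equation*}
while $\partial_\sigma^2\Psi_j = X_j\,\partial_\sigma^2\Phi$ and the interaction term becomes
\begin{equation*}
\sum_{k\neq j}\Gamma_k\frac{\Psi_j-\Psi_k}{|\Psi_j-\Psi_k|^2}
= \frac{1}{|\Phi|^2}\sum_{k\neq j}\Gamma_k\frac{X_j-X_k}{|X_j-X_k|^2}
= \frac{\omega X_j}{|\Phi|^2},
\end{equation*}
where the key cancellation is that the polygon symmetry makes the direction $X_j-X_k$ scale out exactly. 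Dividing by $X_j$, the whole system collapses to the single equation
\begin{equation*}
i\dt\Phi + \partial_\sigma^2\Phi - \omega\Phi + \frac{\omega}{|\Phi|^2}\Phi = 0,
\end{equation*}
which is a nonlinear Schr\"odinger equation with a Ginzburg--Landau-type nonlinearity whose energy is exactly $\mathcal E(\Phi)$, conserved along the flow (together with a suitable mass/momentum). This is the crucial structural step and the place where the polygon hypothesis and the choice $\Gamma_j=1$, $\omega>0$ are genuinely used.

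Next I would set up the Cauchy theory for this scalar equation in the affine space $\Phi_0 + H^1(\R)$. Writing $\Phi = \Phi_0 + v$ with $v\in H^1$, the equation for $v$ has an inhomogeneous term coming from $\Phi_0$; local well-posedness follows from the standard Strichartz / energy contraction argument once one controls the nonlinearity $F(\Phi)=-\omega\Phi+\omega\Phi/|\Phi|^2$, which is smooth and Lipschitz on the region where $|\Phi|$ is bounded away from $0$ and $\infty$. Here is where Lemma~\ref{lemma:ginzburg} (the Ginzburg--Landau lemma introducing the absolute constant $\eta_1$) enters: the assumption $\mathcal E(\Phi_0)\le\eta_1$ should, via conservation of $\mathcal E$ and a Gagliardo--Nirenberg / Sobolev embedding in one dimension, give a uniform two-sided bound
\begin{equation*}
\frac34 \le |\Phi(t,\sigma)| \le \frac54 \qu*\text{for all }t,\sigma,
\end{equation*}
so that the solution never enters the singular set $\{\Phi=0\}$ of the nonlinearity. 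This pointwise bound is exactly what converts local existence into global existence (no blow-up of the $H^1$ norm, no approach to the singularity), and it immediately yields the claimed ratio bound since
\begin{equation*}
\frac{|\Psi_j(t,\sigma)-\Psi_k(t,\sigma)|}{|X_j(t)-X_k(t)|} = |\Phi(t,\sigma)|.
\end{equation*}
The main obstacle I anticipate is precisely this uniform lower bound $|\Phi|\ge 3/4$: one must show that smallness of the energy propagates to an $L^\infty$ bound keeping $\Phi$ away from zero, which requires combining conservation of $\mathcal E$ with the one-dimensional embedding $H^1(\R)\hookrightarrow L^\infty(\R)$ applied not to $\Phi$ itself (which need not decay) but to $\Phi-\Phi_0$ or, more robustly, to a quantity like $1-|\Phi|^2$ whose $L^1$-control is built into $\mathcal E$; this is what Lemma~\ref{lemma:ginzburg} is designed to supply, and the calibration of $\eta_1$ is the delicate point.

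Finally, the asymptotic statements are a clean corollary. Once global existence and the bound $\Phi-\Phi_0\in C(\R,H^1(\R))$ are established, the one-dimensional embedding gives $\Phi(t,\cdot)-\Phi_0\in L^\infty$ with $(\Phi(t,\sigma)-\Phi_0(\sigma))\to 0$ as $|\sigma|\to\infty$; hence if $\Phi_0(\sigma)\to 1$ then $\Phi(t,\sigma)\to 1$, so $\Psi_j(t,\sigma)=X_j(t)\Phi(t,\sigma)\to X_j(t)$, and if moreover $\Phi_0\in 1+H^1(\R)$ then $\Psi_j-X_j = X_j(\Phi-1)$ with $\Phi-1=(\Phi-\Phi_0)+(\Phi_0-1)\in H^1(\R)$ and $|X_j(t)|=1$, giving $\Psi_j-X_j\in C(\R,H^1(\R))$. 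Uniqueness follows from the uniqueness in the scalar Cauchy problem together with the observation that any solution of \eqref{syst:interaction} of the assumed product form must have its $\Phi$-factor solve the reduced equation.
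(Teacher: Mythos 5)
Your proposal follows essentially the same route as the paper: reduce the system under the product ansatz to the scalar equation $i\dt\Phi+\partial_\sigma^2\Phi+\omega\Phi(1-|\Phi|^2)/|\Phi|^2=0$, set up a contraction in $\Phi_0+H^1(\R)$ using the small-energy coercivity of Lemma \ref{lemma:ginzburg} to keep $|\Phi|$ uniformly away from zero, and iterate via conservation of $\mathcal E$ to get global existence and the distance-ratio bound. Only a cosmetic slip: the interaction term should read $\frac{\Phi}{|\Phi|^2}\sum_{k\neq j}\Gamma_k\frac{X_j-X_k}{|X_j-X_k|^2}=\frac{\omega X_j\Phi}{|\Phi|^2}$ rather than $\frac{\omega X_j}{|\Phi|^2}$, which still yields the same reduced equation.
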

\begin{remark}
 Theorem \ref{thm:main-2} does not assert that if initially $\|\Phi_0-1\|_{H^1}$ is small
 then $\|\Phi(t)-1\|_{H^1}$ remains small for all $t$.
\end{remark}

Our analysis is based on the observation that 
the solution $(\Psi_j)_j$ to Syst. \eqref{syst:interaction}  satisfies \eqref{hyp:pert-N} if and only if  
$\Phi$ is solution to the equation
\begin{equation}\label{eq:BM}
 i\partial_t \Phi+\partial_\sigma^2 \Phi+\omega \frac {\Phi}{|\Phi|^2}(1-|\Phi|^2)=0.
\end{equation}Eq. \eqref{eq:BM} is an hamiltonian equation, which preserves the energy 
\begin{equation}\label{def:energy-BM}
 \mathcal E(\Phi)= \frac{1}{2}\int |\partial_\sigma \Phi|^2+\frac{\omega}{2}\int \left(|\Phi|^2-1-\ln |\Phi|^2\right).
\end{equation}
Note that in the setting of Theorem \ref{thm:main-2} the solutions satisfy $|\Phi|\simeq 1$, so that  
Eq. \eqref{eq:BM}  is formally similar to the well-known Gross-Pitaevskii 
equation
\begin{equation}\label{eq:GP}
 i\partial_t \Phi+\partial_\sigma^2 \Phi+\omega \Phi(1-|\Phi|^2)=0,
\end{equation}
with energy  given by 
\begin{equation*}
 \mathcal E_{GP}(\Phi)= \frac{1}{2}\int |\partial_\sigma \Phi|^2+\frac{\omega}{4}
\int \left(|\Phi|^2-1\right)^2. 
\end{equation*}
In fact we shall see that both functionals $\mathcal{E}(\Phi)$ and $\mathcal{E}_{GP}(\Phi)$ are comparable  
whenever $|\Phi|\simeq 1$. A key point in the  
proof is, as in \cite{KPV}, the fact that if $\mathcal E(\Phi_0)$ is small then the solution $\Phi$ enjoys the property
\begin{equation}\label{ineq:coercivity}
  \sup_{t\in \R}\left\| |\Phi(t)|^2-1\right\|_{L^\infty}\leq \frac{1}{4}.\end{equation} 
This allows us to establish Theorem \ref{thm:main-2} by using the techniques introduced in \cite{Zh2} by P.~E.~Zhidkov (see also P.~G\'erard   \cite{PG0}, \cite{PG}) for solving the Gross-Pitaevskii equation in the energy space.\par
In the case where $\Phi_0\in 1+H^1(\R)$ we mention that the proof in \cite{KPV} can be adapted here, 
by showing that some quantities are still conserved even though $|X_j(t)-X_k(t)|$ are not all the 
same.

\medskip

As far as we have seen, global existence and uniqueness 
of the filaments hold for $N=2$ with any $(X_j)_j$ and any small pertubations, 
for $N=3$ with $(X_j)_j$ the equilateral triangle stable equilibrium and any small pertubations, for any $N\geq 2$ 
with $(X_j)_j$ the regular polygon equilibrium
and any small pertubations with strong symmetry conditions. We expect then that 
global existence might hold for small $N$ and less restrictive conditions on the perturbations.

\medskip

In the second part of this paper we study the case
\begin{equation*}
 N=4,\quad \Gamma_j=1,
\end{equation*}
and we assume that
$
 (X_j)_j=(X_1,X_2,X_3,X_4)
$
 is a square of 
radius $1$ rotating with constant angular speed. 
Again, since \eqref{syst:interaction} is 
invariant under translations, we can suppose that the square is centered at the origin. 
Our main result  in this case may be formulated as follows.
\begin{theorem}
 \label{thm:main}
Let $N=4$ and $(X_j)_j$ be the equilibrium solution given by a rotating square of radius $1$ with $\Gamma_j=1$. 
 Let $(u_{j,0})_j\in H^1(\R)^4$ and set $\Psi_{j,0}=X_{j,0}+u_{j,0}$.

We introduce the energy\footnote{Note that $\mathcal{E}_0\geq 0$.}
\begin{equation*}\begin{split}\mathcal{E}_0&=\frac{1}{2}\sum_{j}
  \int \left|\partial_\sigma \P_{j,0}(\sigma)\right|^2\,d\sigma\\
&+\frac{1}{2}\sum_{j\neq k}  \int -\ln\left(\frac{|\Psi_{j,0}(\sigma)-\Psi_{k,0}(\sigma)|^2}{|X_{j,0}-X_{k,0}|^2}\right)
+\left(\frac{|\Psi_{j,0}(\sigma)-\Psi_{k,0}(\sigma)|^2}{|X_{j,0}-X_{k,0}|^2}-1\right)\,d\sigma.\end{split}\end{equation*}

We also introduce the quantity
\begin{equation*}
 \tilde{\mathcal{E}_0}=\max\left\{\mathcal{E}_0;
\frac{\|u_{1,0}+u_{3,0}\|_{L^2}^2}{2}+\frac{\|u_{2,0}+u_{4,0}\|_{L^2}^2}{2}\right\}
\end{equation*}
and we assume that 
\begin{equation*}
 \tilde{\mathcal{E}_0}\leq \eta_2
\end{equation*}
for an absolute small constant $\eta_2>0$. Then there exists an absolute constant $C>0$, and there exists a time $T$,
with
$$T\geq C
\min\left\{\frac{1}{{\tilde{\mathcal{E}_0}}^{1/4}\max_{j,k}
\|u_{j,0}-u_{k,0}\|_{L^2}^{1/2}},\frac{1}{\tilde{{\mathcal{E}_0}}^{1/3}}\right\},$$
such that there exists 
a unique corresponding solution 
$(\Psi_j)_j$
to Syst. \eqref{syst:interaction} on $[0,T]$, satisfying 
 $\Psi_j=X_j+u_j$, with $u_j\in C\left([0,T],H^1(\R)\right)$, and such that
\begin{equation*}
\frac 34\leq \frac{|\Psi_j(t,\sigma)-\Psi_k(t,\sigma)|}{|X_j(t)-X_k(t)|}\leq \frac 54,\quad t\in [0,T],
\quad \sigma\in \R. 
\end{equation*}

Finally, if the initial perturbation is parallelogram-shaped, namely
\begin{equation*}
 \|u_{1,0}+u_{3,0}\|_{L^2}=\|u_{2,0}+u_{4,0}\|_{L^2} =0,
\end{equation*}
then the solution $(\Psi_j)_j$ is globally defined.

\end{theorem}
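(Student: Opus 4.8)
The plan is to use the symmetry $u_{1,0}+u_{3,0}=u_{2,0}+u_{4,0}=0$ to reduce the four--filament problem to a two--filament one, to produce for the reduced system a conserved functional that is simultaneously coercive and, at $t=0$, controlled by $\tilde{\mathcal E_0}$, and then to conclude by the same Ginzburg--Landau type argument as in Theorem \ref{thm:main-2}. First I would check that the parallelogram symmetry propagates. Using the equivariance of \eqref{syst:interaction} under permutations of the filaments and under $\Psi_j\mapsto-\Psi_j$, together with the fact that the square of radius $1$ centered at the origin satisfies $X_3(t)=-X_1(t)$, $X_4(t)=-X_2(t)$ for all $t$, one sees that the involution $\tau:(\Psi_1,\Psi_2,\Psi_3,\Psi_4)\mapsto(-\Psi_3,-\Psi_4,-\Psi_1,-\Psi_2)$ sends a solution of \eqref{syst:interaction} for this configuration to another one, with the same pairwise distances and hence satisfying the same bounds $\tfrac34\le|\Psi_j-\Psi_k|/|X_j-X_k|\le\tfrac54$. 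Under the hypothesis $u_{3,0}=-u_{1,0}$, $u_{4,0}=-u_{2,0}$ the initial datum is $\tau$--invariant, so the uniqueness statement of the theorem forces $\Psi_3(t)=-\Psi_1(t)$, $\Psi_4(t)=-\Psi_2(t)$ on the whole interval of existence; in particular $\sum_j\Psi_j\equiv0$. Inserting this into \eqref{syst:interaction} and using $z/|z|^2=1/\bar z$, the first two equations become a closed Schr\"odinger system for $(\Psi_1,\Psi_2)$ whose interaction distances are $|\Psi_1-\Psi_2|$, $|\Psi_1+\Psi_2|$, $2|\Psi_1|$ and $2|\Psi_2|$, with equilibrium values $\sqrt2,\sqrt2,2,2$; it remains to prove that this reduced system is global.

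The heart of the matter is to exhibit a conserved coercive quantity. Writing $f(s):=s-1-\ln s\ge0$ (the integrand of $\mathcal E_0$), I expect the functional
\begin{multline*}
\mathcal G:=\int\big(|\partial_\sigma\Psi_1|^2+|\partial_\sigma\Psi_2|^2\big)\\
+\int\Big(f\big(\tfrac{|\Psi_1-\Psi_2|^2}{2}\big)+f\big(\tfrac{|\Psi_1+\Psi_2|^2}{2}\big)+\tfrac12 f(|\Psi_1|^2)+\tfrac12 f(|\Psi_2|^2)\Big)
\end{multline*}
to be conserved along the reduced flow: differentiating in time and using the equations, the contributions of the singular interaction terms cancel between the kinetic and the potential parts, and what is left equals $\tfrac32\frac{d}{dt}\int(|\Psi_1|^2+|\Psi_2|^2)$, which vanishes because of the parallelogram identity $|\Psi_1-\Psi_2|^2+|\Psi_1+\Psi_2|^2=2|\Psi_1|^2+2|\Psi_2|^2$ (the manipulations being justified, as usual in the energy space, by approximation with decaying data). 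Since $f\ge0$, $\mathcal G$ is a sum of nonnegative terms; comparing it with $\mathcal E_0$ restricted to parallelogram configurations gives $\mathcal G\le\mathcal E_0\le2\mathcal G$. Hence $\mathcal G(t)=\mathcal G(0)\le\mathcal E_0(0)\le\tilde{\mathcal E_0}\le\eta_2$ throughout the interval of existence.

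From $\mathcal G(t)\le\eta_2$ one reads off $\|\partial_\sigma\Psi_\ell(t)\|_{L^2}^2\le\eta_2$ together with the smallness of $\int f(|\Psi_\ell|^2)$ and of $\int f(|\Psi_1\pm\Psi_2|^2/2)$. Applying Lemma \ref{lemma:ginzburg} to each of $\Psi_1,\Psi_2,(\Psi_1-\Psi_2)/\sqrt2,(\Psi_1+\Psi_2)/\sqrt2$ (legitimate for $\eta_2$ small enough) yields, uniformly in $t$, $\||\Psi_\ell|^2-1\|_{L^\infty}\le\tfrac14$ and $\||\Psi_1\pm\Psi_2|^2/2-1\|_{L^\infty}\le\tfrac14$; this gives the stated ratio bounds and, above all, keeps all four interaction distances uniformly away from $0$, so that no collision occurs. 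Finally, writing $u_\ell=\Psi_\ell-X_\ell$, one has $i\dt u_\ell+\partial_\sigma^2 u_\ell+(V_\ell-V_\ell^\infty)=0$, where $V_\ell$ is the interaction term of the reduced system and $V_\ell^\infty$ its limit as $|\sigma|\to\infty$; since the distances are bounded below, $V_\ell-V_\ell^\infty$ is Lipschitz in $(u_1,u_2)$ and vanishes at infinity, so $\|V_\ell-V_\ell^\infty\|_{L^2}\le C(\|u_1\|_{L^2}+\|u_2\|_{L^2})$, and a Gronwall estimate gives $\|u_\ell(t)\|_{L^2}\le e^{Ct}\|u_{\ell,0}\|_{L^2}$. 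Together with the uniform bound on $\|\partial_\sigma u_\ell(t)\|_{L^2}$ and the uniform lower bound on the distances, this lets one iterate the local theory with a time step that does not shrink to zero, whence the solution is global.

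The main obstacle is the second paragraph. The functional $\mathcal E_0$ is \emph{not} conserved when the distances $|X_j-X_k|$ are not all equal --- which is precisely why the general rotating square yields only the finite lifespan $T$ above; it is the parallelogram constraint that saves the day, forcing $\sum_j\Psi_j\equiv0$ and making the mismatch between the logarithmic and the quadratic parts of the energy absorbable, via the parallelogram law, into a conserved mass--type integral, thereby producing the conserved and coercive $\mathcal G$. Verifying that the numerical coefficients really do make $\mathcal G$ at once conserved and strong enough to control all the relevant $\||\cdot|^2-1\|_{L^\infty}$ norms, and justifying the conservation identities rigorously in the energy space, are the technical points left to carry out.
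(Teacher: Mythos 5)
Your treatment of the parallelogram case is essentially the paper's: the involution $(\Psi_1,\Psi_2,\Psi_3,\Psi_4)\mapsto(-\Psi_3,-\Psi_4,-\Psi_1,-\Psi_2)$ combined with uniqueness propagates $\Psi_3=-\Psi_1$, $\Psi_4=-\Psi_2$, and one then exhibits a conserved coercive renormalized energy. Two caveats there. First, the functional $\mathcal G$ you write down is not the conserved one. Restricted to parallelogram configurations, the paper's energy $\mathcal E(t)$ of \eqref{def:energy-losange} has (with $f(s)=s-1-\ln s$) potential part $2f(|\Psi_1-\Psi_2|^2/2)+2f(|\Psi_1+\Psi_2|^2/2)+f(|\Psi_1|^2)+f(|\Psi_2|^2)$, i.e.\ exactly twice yours, with the same kinetic part; hence $\mathcal G=\frac12\mathcal E+\frac12\int(|\partial_\sigma\Psi_1|^2+|\partial_\sigma\Psi_2|^2)$, and since the kinetic part alone is not conserved, $\mathcal G$ cannot be. The actual mechanism (Proposition \ref{prop:I}) is the algebraic identity $\mathcal E(t)=\mathcal H+\frac12\mathcal T-\mathcal A+\frac12\bigl(\|u_1+u_3\|_{L^2}^2+\|u_2+u_4\|_{L^2}^2\bigr)$, where $\mathcal H,\mathcal T,\mathcal A$ are the renormalized conserved quantities of the full system; in the parallelogram case the last term vanishes and $\mathcal E(t)$ itself is conserved and coercive. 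Second, your heuristic ``what is left equals $\frac32\frac{d}{dt}\int(|\Psi_1|^2+|\Psi_2|^2)$'' cannot be taken literally, since $\int|\Psi_\ell|^2=\infty$; every step must go through the renormalized quantities. Both points are repairable, and you flagged them as unverified.

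The genuine gap is that the main assertion of the theorem --- the lower bound $T\geq C\min\bigl\{\tilde{\mathcal E_0}^{-1/4}\max_{j,k}\|u_{j,0}-u_{k,0}\|_{L^2}^{-1/2},\ \tilde{\mathcal E_0}^{-1/3}\bigr\}$ for \emph{general} small-energy perturbations, not assumed parallelogram-shaped --- is nowhere addressed in your proposal, which assumes $u_{1,0}+u_{3,0}=u_{2,0}+u_{4,0}=0$ from the first line. For the square, $\mathcal E(t)$ is not conserved, and the whole point is to control its growth. This requires (i) the identity of Proposition \ref{prop:I}, which reduces the problem to controlling $\|v\|_{L^2}+\|w\|_{L^2}$ with $v=u_1+u_3$, $w=u_2+u_4$; (ii) the key cancellation $\mathcal L_v(u)=0$ of all \emph{linear} terms in the evolution equation for $v$ (a consequence of the square's symmetries $X_{12}=-X_{34}$, $iX_{12}=X_{23}$), so that $v$ and $w$ grow only through terms quadratic in $u$, which is where the powers of $\tilde{\mathcal E_0}$ in the lifespan come from; and (iii) an a priori bound on $\|u_{jk}\|_{L^2}$ in terms of $\mathcal E$. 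Without (ii) one would only recover a much shorter, logarithmic-type lifespan as in \cite{KPV}. You would also need to supply the local well-posedness and continuation criterion you invoke as ``the local theory'' (Lemma \ref{lemma:lwp} and Remark \ref{rem:extension}), since the hypotheses here are on the energy rather than on $\sum_j\|u_{j,0}\|_{H^1}$, and the claimed existence and uniqueness on $[0,T]$ for non-symmetric data is part of what must be proved.
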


\begin{remark}In the proof of Theorem \ref{thm:main}
we shall actually establish 
a local existence result for any $N$, any parallel 
configuration $(X_j)_j$, any set of positive circulations $(\Gamma_j)_j$ and any perturbations with small energy,  but not necessarily small in $H^1$. This is a slight improvement of the result in \cite{KPV}, see also the next two remarks. 
\end{remark}

\begin{remark} 	As we shall see, we can infer from  
the smallness of the energy $\mathcal{E}_0$ and from Sobolev embeddings that the nearly parallel filaments $\Psi_{j,0}$
are not too far from the straight filaments $X_{j,0}$ and that
$ \mathcal {E}_0\leq C\sum_j \|u_{j,0}\|_{H^1}^2.$ Conversely, if we assume that $\sum_j \|u_{j,0}\|_{H^1}$ 
is sufficiently small then one can show that $ 
\tilde{\mathcal {E}_0}\leq C\sum_j \|u_{j,0}\|_{H^1}^2$ and
the assumptions of Theorem \ref{thm:main} are satisfied. Therefore the hypothesis on the energy is less restrictive than the one on the $H^1$ norm, see also the next remark.
\end{remark}

\begin{remark}From 
$0\leq \mathcal {E}_0\leq C\sum_j \|u_{j,0}\|_{H^1}^2$ it follows that 
 $\tilde{\mathcal {E}_0}\leq C\sum_j \|u_{j,0}\|_{H^1}^2$ so
the time of existence is a priori larger than in \cite{KPV}. Moreover, 
for all $\epsilon>0$ Theorem \ref{thm:main} allows for initial perturbations of the form
$$\Psi_{j,0}^\epsilon(\sigma)=e^{i\varphi^\epsilon(\sigma)} X_{j,0}+T^\epsilon(\sigma),$$
with $\varphi^\epsilon,T^\epsilon$ such that $\|(\varphi^\epsilon,T^\epsilon)\|_{H^1}
=O(1)$.  This amounts to rotating and translating the square $(X_j)_j$ at each level $\sigma$. By taking  
oscillating phases of the form 
$\varphi^\epsilon(\sigma)=\sqrt{\eps} \varphi_0(\eps \sigma)$ with a fixed $\varphi_0\in H^1$, which 
implies 
$\|\varphi^\epsilon\|_{L^2}\geq O(1)$, 
$\|\nabla\varphi^\epsilon\|_{L^2}=O(\epsilon)$ and by choosing $T^\epsilon$ such that $\|T^\epsilon\|_{ H^1}=O(\epsilon)$ 
we compute
\begin{equation*}
 \tilde{\mathcal E_0}=O(\epsilon^2),\quad \sum_j\|u_{j,0}\|_{H^1}^2\geq O(1).
\end{equation*}
Therefore Theorem \ref{thm:main}  provides a unique solution a least up to time of order $1/\sqrt{\eps}$, while the $H^1$ norm of the perturbations is of order one. 
This suggests that the energy space is more appropriate for the analysis of \eqref{syst:interaction} 
than classical Sobolev spaces.
\end{remark}

The proof of Theorem \ref{thm:main} follows the one of Theorem \ref{thm:main-2} combined with 
the one in \cite{KPV}. 
In particular, we consider, as in \cite{KPV}, the energy
\begin{equation}
\label{def:energy-losange}
\begin{split}\mathcal{E}(t)&=\frac{1}{2}\sum_{j}
  \int \left|\partial_\sigma \P_j(t,\sigma)\right|^2\,d\sigma\\
&+\frac{1}{2}\sum_{j\neq k}  \int -\ln\left(\frac{|\Psi_j(t,\sigma)-\Psi_k(t,\sigma)|^2}{|X_j(t)-X_k(t)|^2}\right)
+\left(\frac{|\Psi_j(t,\sigma)-\Psi_k(t,\sigma)|^2}{|X_j(t)-X_k(t)|^2}-1\right)\,d\sigma,\end{split}\end{equation} 
and show that the solution can be extended as long as $\mathcal E(t)$ remains small. For this purpose we 
show that $u_j$ can be extended locally from a time $t_0$ by a fixed point argument for small $H^1$ 
perturbations $w_j$ of the linear evolutions of the initial data, i.e. $u_j(t)=e^{i(t-t_0)\partial_\sigma^2}u_j(t_0)+w_j(t)$. In here we use crucially the fact that the deviation $e^{i(t-t_0)\partial_\sigma^2}u_j(t_0)-u_j(t_0)$ can be upper-bounded in $L^\infty$ in terms of the energy at the initial time $\mathcal E(t_0)$. 
As observed in  \cite{KPV}, for any two parallel filaments and for the equilateral triangle configuration the energy is conserved, i.e.
 $\mathcal{E}(t)=\mathcal{E}(0)=\mathcal{E}_0$, so that
global existence follows for small energy perturbations. Unfortunately, under the assumptions
of Theorem \ref{thm:main} the energy  is no longer conserved (unless the perturbation $(u_j)_j$ is parallelogram-shaped). 
Instead, we estimate its evolution in time, showing that it does not increase too fast, 
and this control enables us to obtain a large time of existence.

\medskip

We finally mention another collection of dynamics that is 
 governed by the linear Schr\"odiger equation. 
For shifted perturbations $\Psi_j=X_j+u$, for any 
$X_j$ with $\Gamma_j$ the same, we obtain that $u$ is a solution of the linear Schr\"odiger equation.  
So if $u$ is regular enough, it 
has constant $H^1$ norm, so the filaments remain separated for all time. Moreover, due to the 
dispersive inequality for the linear Schr\"odinger equation, the perturbations spread in time 
along the parallel configuration $X_j$. Finally, we get examples of $\mathcal C^\infty$ perturbations 
decaying at infinity  that generate a singularity in finite time by considering less regular 
pertubations than $H^1$ that lead to a $L^\infty$ dispersive blow-up for the linear Schr\"odinger. 
The self-similar linear Schr\"odinger solution constructed from  homogeneous data $|x|^{-p}$ with 
$0<p<1$ in \cite{CaWe} leads to solutions blowing-up in $L^\infty$ in finite time at one point. Also, the linear 
Schr\"odinger evolution of $e^{i|x|^2}/(1+|x|^2)^m$ with $1/2<m\leq 1$ has been 
proved in \cite{BoSa} to be an $L^2$ solution whose modulus blows-up in finite time at one point.

\medskip

The third part of this work is devoted to travelling waves for Syst. \eqref{syst:interaction}. Let us recall that in the case of one single filament, a travelling wave dynamics was exhibited by H.~Hasimoto \cite{Has} and experimentally observed by E. J.~Hopfinger and F.K.~Browand \cite{HB}. Here we construct travelling waves for several filaments via finite energy travelling wave 
solutions to Eq. \eqref{eq:BM}, i.e. solutions of the form 
$\Phi(t,\sigma)=v(\sigma+ct)$, with  $v$ solution of the equation
\begin{equation}
 \label{eq:TW}
ic v'+v''+\omega\frac{v}{|v|^2}(1-|v|^2)=0
\end{equation}
and having finite energy, 
\begin{equation}
 \mathcal{E}(v)=\frac{1}{2}\int|\partial_\sigma v|^2+\frac{\omega}{2}\int \big( |v|^2-1-\ln |v|^2\big)<\infty.
\end{equation} 
As in Theorem \ref{thm:main-2} we assume that $\omega>0$. In order to avoid having $v$ approaching zero we shall impose that the energy is small. \par
Existence, stability issues and qualitative behaviour near the speed of sound of travelling waves for Gross-Pitaevskii-type equations and related problems were extensively studied in the past years (see for instance \cite{DeB, BS, DiMGa, Gr, BeGrSaSm, Ma, ChRo} and the references therein). For the one-dimensional Gross-Pitaevskii equation \eqref{eq:GP}, finite energy travelling waves (referred to as "grey solitons"
in the context of non-linear optics) are known to exist for all 
$0<c<\sqrt{2\omega}$, and they have the explicit
form (see e.g. \cite{Gr}) 
\begin{equation*}
\begin{split}
 v(\sigma)=v_c(\sigma)&=\sqrt{ 1-\frac{\frac{1}{2\omega}(2\omega-c^2)}{\cosh^2\left ( \frac{\sqrt{2\omega-c^2}}{2} \sigma\right)}}
\,e^{i \arctan \frac{\omega e^{\sqrt{2\omega-c^2}\sigma}+c^2-\omega}{c\sqrt{2\omega-c^2}}-i\arctan
\frac{c}{\sqrt{2\omega-c^2}}}.
\end{split}
\end{equation*}
The modulus  $|v_c|$ of such maps is close to $1$ when $c$ is close to $\sqrt{2\omega}$, in which case
 $\mathcal{E}(v_c)\leq C\mathcal{E}_{GP}(v_c)\leq C(2\omega-c^2)^{3/2}$ (see \cite{Gr}), so the energy is finite and
 as small as needed. Note that therefore the maps $v_c$, with $c$ close to $\sqrt{2\omega}$ enter the class of
perturbations presented in Theorem \ref{thm:main-2}. Our next result in this context is the following.
\begin{theorem}
 \label{thm:TW}
Let $c$ such that $0<2\omega-c^2<\eta_3$ for an absolute small constant $\eta_3>0$. There exists a travelling wave solution to Syst. \eqref{syst:interaction} $$\Psi_j(t,\sigma)=e^{it\omega+i\frac{2\pi j}{N}}v(\sigma+ct),$$
where $v\in C^\infty(\R)$ is a solution to Eq.
\eqref{eq:TW}, with finite energy $
 \mathcal{E}(v)\leq C(2\omega-c^2)^{3/2}$, such that $v$ never vanishes. The modulus
 $|v|$ is an even function, increasing on $[0,\infty)$ and satisfying on $\R$
\begin{equation*}
 0<1-|v(\sigma)|^2 <\min\left\{\frac{3}{2\omega}(2\omega-c^2),\, C\sqrt{2\omega-c^2}e^{-\sqrt{2\omega-c^2}^{\,-}|\sigma|}\right\}.
\end{equation*} Finally,
we have a limit at infinity
\begin{equation*}
 v(\sigma)\to \exp(i\theta_{\pm}),\quad \sigma\to \pm \infty,
\quad \text{with }
 |\theta_+-\theta_-|\leq C\sqrt{2\omega-c^2}.
\end{equation*}

Here $C$ denotes an absolute numerical constant.

\end{theorem}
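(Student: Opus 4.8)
The plan is to reduce the statement to the construction of the profile $v$, and then to run a phase‑plane analysis of the resulting ODE. Since $\Psi_j(t,\sigma)=e^{it\omega+i2\pi j/N}v(\sigma+ct)=X_j(t)\,\Phi(t,\sigma)$ with $\Phi(t,\sigma)=v(\sigma+ct)$, and since $(\Psi_j)_j$ solves \eqref{syst:interaction} if and only if $\Phi$ solves \eqref{eq:BM}, and plugging $\Phi(t,\sigma)=v(\sigma+ct)$ into \eqref{eq:BM} gives exactly \eqref{eq:TW}, it is enough to produce $v\in C^\infty(\R)$ solving \eqref{eq:TW} with the announced properties. I would work in hydrodynamic variables $v=\rho e^{i\phi}$, $\rho=|v|>0$: the imaginary part of \eqref{eq:TW} integrates to $\bigl(\tfrac c2\rho^2+\rho^2\phi'\bigr)'=0$, and normalizing by the boundary conditions $\rho\to1$, $\phi'\to0$ at infinity gives $\phi'=\dfrac{c(1-\rho^2)}{2\rho^2}$; substituting into the real part yields the autonomous equation $\rho''=g(\rho)$ with first integral $(\rho')^2=2W(\rho)$, where $W(\rho)=-\omega\ln\rho+\bigl(\tfrac\omega2-\tfrac{c^2}{8}\bigr)\rho^2-\tfrac{c^2}{8\rho^2}-\tfrac\omega2+\tfrac{c^2}{4}$, and one computes $W(1)=W'(1)=0$, $W''(1)=2\omega-c^2>0$.

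Next I would read off the soliton from the phase portrait, using the algebraic identity $\rho^3W'(\rho)=\bigl(\omega-\tfrac{c^2}{4}\bigr)(\rho^2-1)(\rho^2-x_2)$ with $x_2=\dfrac{c^2}{4\omega-c^2}\in(0,1)$ (the relevant quadratic having discriminant $(\omega-\tfrac{c^2}{2})^2$). This makes $W$ increasing on $(0,\sqrt{x_2})$, decreasing on $(\sqrt{x_2},1)$, and increasing on $(1,\infty)$; together with $W\to-\infty$ at $0^+$ and $W(1)=0$ it follows that $W$ has a unique zero $\rho_*\in(0,\sqrt{x_2})$, that $\rho_*$ is simple with $W'(\rho_*)>0$, and that $W>0$ on $(\rho_*,1)$. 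The orbit $(\rho')^2=2W(\rho)$ taking values in $[\rho_*,1)$ is then the homoclinic loop to the saddle $(1,0)$: it is an even $C^\infty$ function of $\sigma$, equal to $\rho_*$ at $\sigma=0$ with $\rho''(0)=W'(\rho_*)>0$ (a nondegenerate minimum), strictly increasing on $[0,\infty)$, and tending to $1$ as $\sigma\to\pm\infty$ (the double zero of $W$ at $1$ being responsible for this being only an asymptotic limit). Taking $\phi(\sigma)=\int_0^\sigma\tfrac{c(1-\rho^2)}{2\rho^2}$, the function $v=\rho e^{i\phi}$ solves \eqref{eq:TW}, never vanishes since $\rho\ge\rho_*>0$, is $C^\infty$ by ODE regularity, and has $|v|=\rho$ even and increasing on $[0,\infty)$.

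It then remains to establish the three quantitative assertions (constants below may depend on the fixed parameter $\omega$). For the bound on $1-|v|^2$: since $\rho\in[\rho_*,1)$ one has $0<1-\rho^2(\sigma)\le1-\rho_*^2$, so it suffices to show $\rho_*^2>1-\tfrac{3}{2\omega}\mu$ where $\mu:=2\omega-c^2$. Writing $\widetilde W(m):=W(\sqrt m)$, which is increasing and $<0$ on $(0,\rho_*^2)$, and using $1-\tfrac{3\mu}{2\omega}<x_2$, this reduces to $\widetilde W\bigl(1-\tfrac{3\mu}{2\omega}\bigr)<0$; a Taylor expansion at $m=1$, where $\widetilde W(1)=\widetilde W'(1)=0$, $\widetilde W''(1)=\tfrac\mu4$, $\widetilde W'''(1)=\tfrac{2\omega-3\mu}{4}$, $\widetilde W''''(1)=-3(\omega-\mu)$, shows that the second‑ and third‑order contributions at $m=1-\tfrac{3\mu}{2\omega}$ cancel exactly, the leading term being $-\tfrac{27}{128}\mu^4\omega^{-3}+O(\mu^5)$, which is negative for $\mu<\eta_3$ — this is precisely where the smallness threshold enters and the constant $3/(2\omega)$ gets pinned. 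For the decay: near $\rho=1$ one has $2W(\rho)=(2\omega-c^2)(1-\rho)^2\bigl(1+O(1-\rho)\bigr)$, and integrating $\rho'=\sqrt{2W(\rho)}$ yields, for every $\lambda<\sqrt{2\omega-c^2}$, a bound $1-\rho^2(\sigma)\le C\mu\,e^{-\lambda|\sigma|}$ (the need to allow $\lambda$ slightly below $\sqrt{2\omega-c^2}$ is what the notation $\sqrt{2\omega-c^2}^{\,-}$ records; the prefactor $C\mu$ comes from $1-\rho_*^2\le C\mu$ and from the core of the bump having width $O(\mu^{-1/2})$). For the energy: since $|\partial_\sigma v|^2=2W(\rho)+\tfrac{c^2(1-\rho^2)^2}{4\rho^2}$ and $\rho^2-1-\ln\rho^2\le2(1-\rho^2)^2$ (with $\rho^2$ confined to $[\tfrac12,1]$), all three pieces of $\mathcal E(v)$ are controlled by $\int_\R(1-\rho^2)^2$, which — via the change of variable $d\sigma=d\rho/\sqrt{2W(\rho)}$ together with $W'(\rho_*)\sim\mu^2$, or simply via the exponential bound above — is $O(\mu^{3/2})$; hence $\mathcal E(v)\le C(2\omega-c^2)^{3/2}$. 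Finally $\phi'=\tfrac{c(1-\rho^2)}{2\rho^2}$ is integrable by the exponential decay, so $\phi(\sigma)\to\theta_\pm$ and $v(\sigma)\to e^{i\theta_\pm}$ (using $\rho\to1$), with $|\theta_+-\theta_-|\le\int_\R|\phi'|\le Cc\sqrt\mu\le C\sqrt{2\omega-c^2}$.

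The main obstacle is the sharp lower bound $\rho_*^2>1-\tfrac{3\mu}{2\omega}$: because the quadratic and cubic Taylor terms of $\widetilde W$ cancel at the candidate point, the sign is decided only at fourth order, which both forces the threshold $\eta_3$ and fixes the numerical constant $3/(2\omega)$. The second delicate point is extracting the exponential rate $\sqrt{2\omega-c^2}$ uniformly down to $\sigma=0$ (hence the need to concede a slightly smaller rate). The phase‑plane part and the energy bookkeeping are routine once these two points are settled.
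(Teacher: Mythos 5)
Your proposal is correct and reaches all the stated conclusions, but it deviates from the paper's proof in two technical respects that are worth recording. First, the paper works with $\eta=1-|v|^2$ and builds the profile by solving the \emph{first-order} equation $y'=-\sqrt{a(y)}$ from the regularized datum $y(0)=\sigma_1-\eps$ and passing to the limit $\eps\to 0$ (the regularization is needed because $\sqrt{a}$ fails to be Lipschitz at the simple zero $\sigma_1$), then gluing evenly; you instead phrase everything in hydrodynamic variables $\rho=|v|$, exploit the clean factorization $\rho^3W'(\rho)=(\omega-\tfrac{c^2}{4})(\rho^2-1)(\rho^2-x_2)$, and read the solution off as the homoclinic orbit of the second-order conservative equation $\rho''=W'(\rho)$ --- this sidesteps the non-Lipschitz issue entirely, since the turning point $\rho_*$ is a regular point of the second-order flow, at the price of invoking the standard homoclinic construction. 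Second, for the key constant $\tfrac{3}{2\omega}(2\omega-c^2)$: the paper gets it almost for free by observing that $b(\eta)=a(\eta)/\eta^2=(2\omega-c^2)-\tfrac{2\omega}{3}\eta+r(\eta)$ with $r\le 0$ and $b$ strictly decreasing, so the unique zero of $a$ lies in $(0,\sigma_0]$ with $\sigma_0=\tfrac{3(2\omega-c^2)}{2\omega}$; your route of evaluating the potential at the candidate point via a fourth-order Taylor expansion is an equivalent but heavier verification of the same fact $a(\sigma_0)\le 0$. One small imprecision: the quadratic and cubic Taylor terms of $\widetilde{W}$ at $m=1-\tfrac{3\mu}{2\omega}$ do \emph{not} cancel exactly (the exact cancellation happens for $a(\eta)$ at $\eta=\sigma_0$, and the extra factor $1/(8m)$ spoils it for $\widetilde{W}$, leaving an $O(\mu^4)$ residual that must be combined with the fourth-order term); your stated leading value $-\tfrac{27}{128}\mu^4\omega^{-3}$ is nevertheless correct, and in fact the negativity holds for all admissible $\mu$ since the remainder series of $a$ has all coefficients negative, so the threshold $\eta_3$ is not really forced at this step. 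The decay, energy and phase-shift estimates coincide with the paper's.
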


\medskip

It has been noticed in \cite{KPV} that the Galilean invariance of Syst. 
\eqref{syst:interaction} leads to helix-shaped vortex filaments.
In here, on one hand Eq. \eqref{eq:BM} is invariant under Galilean
  transform, i.e. $\Phi_{\nu}(t,\sigma)=e^{-it\nu^2+i\nu \sigma}\,\Phi(t,\sigma-2t\nu)$ is
  also a solution $\forall \nu\in\mathbb R$. On the other hand
  $X_j(t)=e^{it\omega+i\frac{2\pi j}{N}}$ for $j\neq 0$, so
 \begin{equation*}\begin{split}\Psi_{j,\nu}(t,\sigma)&=e^{it(\omega-\nu^2)+i\nu \sigma+ i\frac{2\pi
j}{N}}\,\Phi(t,\sigma-2t\nu)\\
  &=e^{it(\omega-\nu^2)+i\nu \sigma+ i\frac{2\pi j}{N}}\,v(\sigma+t(c-2\nu)).\end{split}\end{equation*}
  Therefore, choosing $\nu=\sqrt{\omega}$, we obtain a stationary $(\theta_+-\theta_-)$-twisted $N$-helix
  filament configuration with some localized peturbation travelling in time on
  each of its filaments.

\medskip
Last but not least, in the last part of this paper we describe configurations of nearly parallel filaments that lead to a collision in finite time. They are obtained by the same kind of dilation-rotation perturbations as in Theorem \ref{thm:main-2}.

\begin{theorem}
 \label{thm:blup}
Let $N\geq 2$ and $(X_j)_j$ be the stationnary configuration given by a regular $N-$polygon with its center and circulations $\Gamma_j=1$ for $1\leq j\leq N$, $\Gamma_0=-(N-1)/2$. Then the initial condition 
$$\Psi_{j,0}(\sigma)=X_j(0)\left(1-\frac{e^{-\frac{\sigma^2}{1-4i}}}{\sqrt{1-4i}}\right)$$
yields a solution $(\Psi_j)_j$ for Syst.  \eqref{syst:interaction}, with $\Psi_j-X_j\in C\left(\R,H^1(\R)\right)$, 
that collide at time $t=1$ at $\sigma =0$. 
\end{theorem}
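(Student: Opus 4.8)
The plan is to realize $(\Psi_j)_j$ as a dilation-rotation perturbation $\Psi_j(t,\sigma)=X_j(t)\Phi(t,\sigma)$, as in \eqref{hyp:pert-N} and Theorem~\ref{thm:main-2}, and to exploit that the configuration of the statement is a \emph{static} relative equilibrium of the point vortex system \eqref{syst:point-vortex}. Indeed, for the regular $N$-polygon of radius $1$ together with its center, with $\Gamma_j=1$ for $1\le j\le N$ and $\Gamma_0=-(N-1)/2$, the angular velocity recalled in the Introduction is $\omega=[(N-1)+2\Gamma_0]/2$, which vanishes precisely because $\Gamma_0=-(N-1)/2$. Hence $X_j(t)\equiv X_{j,0}$ for all $t$, with $X_{0,0}=0$, $|X_{j,0}|=1$ and $\sum_{k=1}^N X_{k,0}=0$ for $j\ge1$, and the equilibrium relation for \eqref{syst:point-vortex} reads $\sum_{k\ne j}\Gamma_k(X_{j,0}-X_{k,0})/|X_{j,0}-X_{k,0}|^2=\omega X_{j,0}=0$ for each $j$.

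First I would substitute $\Psi_j=X_{j,0}\Phi$ into \eqref{syst:interaction}. For $j=0$ the equation becomes $-(\Phi/|\Phi|^2)\sum_{k=1}^N X_{k,0}=0$, which holds automatically. For $j\ge1$, using $\Psi_j-\Psi_k=(X_{j,0}-X_{k,0})\Phi$ and dividing by $X_{j,0}\ne0$, one obtains $i\partial_t\Phi+\partial_\sigma^2\Phi+(\Phi/|\Phi|^2)\big(\Gamma_0 X_{j,0}+\sum_{k\ge1,\,k\ne j}(X_{j,0}-X_{k,0})/|X_{j,0}-X_{k,0}|^2\big)=0$, and the parenthesis vanishes by the equilibrium relation above. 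Thus, at every point where $\Phi\ne0$, the family $(\Psi_j)_j$ solves \eqref{syst:interaction} if and only if $\Phi$ solves $i\partial_t\Phi+\partial_\sigma^2\Phi=0$, i.e. Eq.~\eqref{eq:BM} in the degenerate case $\omega=0$: the free linear Schr\"odinger equation.

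It then remains to propagate the Gaussian datum $\Phi_0(\sigma)=1-e^{-\sigma^2/(1-4i)}/\sqrt{1-4i}$. Since the constant $1$ solves the free equation, I would write $\Phi=1-\psi$ with $\psi(0,\sigma)=e^{-\sigma^2/(1-4i)}/\sqrt{1-4i}\in\mathcal{S}(\R)$. The free Schr\"odinger evolution of a complex Gaussian is again a Gaussian: one checks directly that $(z_0+4it)^{-1/2}e^{-\sigma^2/(z_0+4it)}$ solves $i\partial_t f+\partial_\sigma^2 f=0$, so with $z_0=1-4i$,
\begin{equation*}
\Phi(t,\sigma)=1-\frac{1}{\sqrt{\,1-4i+4it\,}}\,e^{-\frac{\sigma^2}{1-4i+4it}}.
\end{equation*}
Since $e^{it\partial_\sigma^2}$ is strongly continuous on $H^1(\R)$ and $\Psi_j-X_j=-X_{j,0}\psi$, this gives $\Psi_j-X_j\in C(\R,H^1(\R))$. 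For $t\in[0,1)$ one has $1-4i+4it=1-4i(1-t)$, so $|1-4i+4it|>1$ and $\re\big(1/(1-4i+4it)\big)>0$; hence $|\psi(t,\sigma)|\le|1-4i+4it|^{-1/2}<1$, so $\Phi(t,\sigma)\ne0$ and the substitution is legitimate on $[0,1)$ (and likewise for $t>1$). At $t=1$ one has $1-4i+4it=1$, so $\psi(1,\sigma)=e^{-\sigma^2}$; in particular $\Phi(1,0)=0$, whence $\Psi_j(1,0)=X_{j,0}\Phi(1,0)=0$ for every $j$: all $N+1$ filaments meet at the origin at time $t=1$, and, since $e^{-\sigma^2}<1$ for $\sigma\ne0$, nowhere else on that time slice.

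There is no real analytic obstacle here; the argument is a direct verification. The two points that require care are the cancellation of the Newtonian interaction terms — namely identifying the polygon-with-center correctly as a relative equilibrium with $\omega=0$ and invoking that identity in the substitution — and the elementary but crucial fact that $|\Phi(t,\sigma)|>0$ on $[0,1)\times\R$, which is precisely what makes $\Psi_j=X_{j,0}\Phi$ a bona fide solution of \eqref{syst:interaction} up to the collision time $t=1$.
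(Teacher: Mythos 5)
Your proposal is correct and follows essentially the same route as the paper: reduce to the free linear Schr\"odinger equation via the ansatz $\Psi_j=X_j\Phi$ with $\omega=0$ (the derivation of Eq.~\eqref{eq:BM} in Section~\ref{sec:sym}), evolve the complex Gaussian explicitly to get $\Phi(t,\sigma)=1-e^{-\sigma^2/(1-4i(1-t))}/\sqrt{1-4i(1-t)}$, and observe that $|\Phi|\geq 1-(1+16(1-t)^2)^{-1/2}>0$ for $t<1$ while $\Phi(1,0)=0$. The only difference is that you spell out the cancellation of the interaction terms (including the $j=0$ equation) rather than citing the general computation of Section~\ref{sec:sym}, which is a harmless elaboration.
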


\medskip

The remainder of this paper is organized as follows. In 
Section \S\ref{sec:sym} we derive Eq. \eqref{eq:BM}. We then present some 
preliminary lemmas about its energy, which lead to the proof of Theorem
 \ref{thm:main-2}. Section \S\ref{sec:losange} is devoted to the proof of Theorem \ref{thm:main}.  
Section \S\ref{sec:tw} contains the construction of travelling waves for 
Theorem \ref{thm:TW}. Finally, in Section \S\ref{sec:blup} we construct the collision dynamics in Theorem \ref{thm:blup}. In all the following the notation $C$ denotes an absolute constant which can possibly change from
a line to another.

\section{Proof of Theorem \ref{thm:main-2}}\label{sec:sym}
We first derive Eq. \eqref{eq:BM}.  Plugging the ansatz 
$\Psi_j(t,\sigma)=X_j(t)\Phi(t,\sigma)$ into Syst. \eqref{syst:interaction} with $\Gamma_j=1$ for $1\leq j\leq N$ 
we obtain 
$$iX_j\dt \Phi+i\dt X_j \Phi+X_j \partial_\sigma^2 \Phi +\frac{\Phi}{|\Phi|^2}\sum_{k\neq j} 
\frac{X_j-X_k}{|X_j-X_k|^2}=0.$$
Next we use \eqref{syst:point-vortex} to get
$$X_j(i\dt \Phi+ \partial_\sigma^2 \Phi )-i\dt X_j\frac{\Phi}{|\Phi|^2}\left(1-|\Phi|^2\right)=0.$$
Now if we consider a configuration rotating with speed $\omega$ around its steady center of inertia $X_0=0$, 
for $1\leq j\leq N$ we have  $X_j(t)=e^{it\omega+i\theta_j}$, so that $-i\partial_t X_j=\omega X_j$ and hence
 we obtain Eq. \eqref{eq:BM},
$$i\dt \Phi+\partial_\sigma^2 \Phi_j+\omega\frac{\Phi}{|\Phi|^2}\left(1-|\Phi|^2\right)=0.$$
Conversely, assume that $\Phi$ is a solution to Eq. \eqref{eq:BM} and set 
$\Psi_j=X_j\Phi$. Reversing the previous arguments, we obtain
$$i\dt \Psi_j+ \partial_\sigma^2 \Psi_j +\sum_{k\neq j} 
\frac{\Psi_j-\Psi_k}{|\Psi_j-\Psi_k|^2}=0,\quad 1\leq j\leq N,$$
while, since $\Psi_0(t,\sigma)=0$ for all $(t,\sigma)$, 
$$i\dt \Psi_0=\partial_\sigma^2 \Psi_0=0\quad \text{and}\quad  \sum_{k=1}^N \frac{\Psi_0-\Psi_k}{|\Psi_0-\Psi_k|^2}=
-\frac{\Phi}{|\Phi|^2}\sum_{k=1}^N \frac{X_k}{|X_k|^2}=0$$
and therefore $(\Psi_j)_j$
 is a solution to Syst. \eqref{syst:interaction}.

\subsection{Some preliminary lemmas}

\begin{lemma}\label{lemma:ginzburg}There exists an absolute constant $\eta_1$ and a time $t_1$ depending only on $\eta_1$ such that\\
i) If $\mathcal E(f)\leq \eta_1$ then 
$$\||f|^2-1\|_{L^\infty}\leq \frac 14.$$
ii) If $\|\partial_\sigma f\|_{L^2}\leq \eta_1$ then for all $0\leq t\leq t_1$
$$\frac{1}{\sqrt{2}}\|e^{it\partial_\sigma^2}f-f\|_{L^\infty}\leq \|e^{it\partial_\sigma^2}f-f\|_{H^1}\leq \frac 14.$$
\end{lemma}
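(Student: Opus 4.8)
The plan is to treat the two parts of the lemma separately. Throughout, write $W(s)=s-1-\ln s$ for $s>0$, so that $\mathcal E(f)=\frac{1}{2}\int|\partial_\sigma f|^2+\frac{\omega}{2}\int W(|f|^2)$; recall that $W\ge0$ with equality only at $s=1$, that $W$ is strictly decreasing on $(0,1]$ and strictly increasing on $[1,\infty)$, and that $W(s)\to+\infty$ as $s\to0^+$ or $s\to+\infty$. Since $\omega>0$, both terms of $\mathcal E$ are nonnegative, so $\mathcal E(f)\le\eta_1$ gives at once $\|\partial_\sigma f\|_{L^2}^2\le2\eta_1$ and $\int W(|f|^2)\le2\eta_1/\omega<\infty$. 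I will also use the elementary pointwise bound
\[
W(s)\ \ge\ \frac{1}{6}\min\{(s-1)^2,1\},\qquad s>0,
\]
which follows from the Taylor expansion of $\ln$ at $1$ (yielding $W(1+x)\ge x^2/2$ for $-1<x\le0$ and $W(1+x)\ge x^2/6$ for $0\le x\le1$) together with $W(s)\ge W(2)=1-\ln2>\frac16$ for $s\ge2$.

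For part (i), one checks first that finiteness of $\int W(|f|^2)$ together with $\partial_\sigma f\in L^2$ forces $f$ to be continuous and bounded: were $|f|$ very large at some point, then, $\|\partial_\sigma f\|_{L^2}$ being controlled, $|f|$ would remain comparably large on an interval whose length grows with that value, rendering $\int W(|f|^2)$ infinite. Put $A:=\||f|^2-1\|_{L^\infty}$ and pick $\sigma_0$ with $\bigl||f(\sigma_0)|^2-1\bigr|$ as close to $A$ as desired; assume the supremum is attained on the side $|f|^2>1$ (the other case being analogous, and there the constraint $|f|^2\ge0$ even gives $A\le1$ for free). Then $\|f\|_{L^\infty}^2=1+A$, and from $|f(\sigma)|^2-|f(\sigma_0)|^2=\int_{\sigma_0}^{\sigma}2\,\mathrm{Re}(\bar f\,\partial_\sigma f)$ and Cauchy--Schwarz,
\[
\bigl||f(\sigma)|^2-|f(\sigma_0)|^2\bigr|\ \le\ 2\,|\sigma-\sigma_0|^{1/2}\,\|f\|_{L^\infty}\,\|\partial_\sigma f\|_{L^2}\ \le\ 2\sqrt{2\eta_1}\,\sqrt{1+A}\,|\sigma-\sigma_0|^{1/2}
\]
for every $\sigma$. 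Consequently $|f|^2\ge1+A/2$ on an interval around $\sigma_0$ of length $\sim A^2/\bigl((1+A)\eta_1\bigr)$; using the monotonicity of $W$ on $[1,\infty)$ and the pointwise bound above,
\[
\frac{2\eta_1}{\omega}\ \ge\ \int W(|f|^2)\ \gtrsim\ \frac{A^2}{(1+A)\eta_1}\,\min\{A^2,1\}.
\]
This inequality first rules out $A>1$ (for $\eta_1$ small) and then yields $A^4\lesssim\eta_1^2/\omega$, so $A\le1/4$ as soon as $\eta_1$ is a sufficiently small constant; letting $\sigma_0$ run to the supremum completes part (i).

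For part (ii), set $g:=e^{it\partial_\sigma^2}f-f$. The first inequality is just the one-dimensional Sobolev (Agmon) inequality $\|g\|_{L^\infty}^2\le\|g\|_{L^2}\|\partial_\sigma g\|_{L^2}\le\frac12\|g\|_{H^1}^2$. For the second I pass to Fourier variables: since $\widehat g(\xi)=(e^{-it\xi^2}-1)\widehat f(\xi)$,
\[
\|g\|_{H^1}^2=\int_{\R}(1+\xi^2)\bigl|e^{-it\xi^2}-1\bigr|^2\,|\widehat f(\xi)|^2\,d\xi,
\]
which I split at $|\xi|=1$. On $\{|\xi|\le1\}$ I use $1+\xi^2\le2$ and $|e^{-it\xi^2}-1|\le t\xi^2\le t|\xi|$, so the integrand is $\le2t^2\xi^2|\widehat f|^2$ and this part is $\le2t^2\|\partial_\sigma f\|_{L^2}^2\le2t^2\eta_1^2$ — the point being that the (possibly infinite) low-frequency $L^2$-mass of $f$ does not enter, because $e^{-it\xi^2}\approx1$ there. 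On $\{|\xi|>1\}$ I use $1+\xi^2\le2\xi^2$ and $|e^{-it\xi^2}-1|^2\le4$, so the integrand is $\le8\xi^2|\widehat f|^2$ and this part is $\le8\|\partial_\sigma f\|_{L^2}^2\le8\eta_1^2$, uniformly in $t$. Hence $\|g\|_{H^1}^2\le(8+2t^2)\eta_1^2$; fixing an absolute $t_1$ (say $t_1=1$) and then taking $\eta_1$ small, compatibly with part (i), gives $\|g\|_{H^1}\le\frac14$ for $0\le t\le t_1$ — which a posteriori shows $g\in H^1$ and legitimates the Sobolev step.

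The only genuinely delicate point is part (i). One cannot simply invoke the comparison $\mathcal E\sim\mathcal E_{GP}$ and an $H^1$ Gagliardo--Nirenberg estimate, since that comparison already presupposes $|f|\simeq1$; nor can one apply the interpolation $\|h\|_{L^\infty}^2\le2\|h\|_{L^2}\|\partial_\sigma h\|_{L^2}$ to $h=|f|^2-1$, because $f$ — and hence $h$ — need not belong to $L^2$. The device above circumvents both obstructions by a purely local mechanism: a large pointwise deviation of $|f|^2$ from $1$, propagated over a whole interval by the smallness of $\|\partial_\sigma f\|_{L^2}$, forces a definite amount of the integral $\int W(|f|^2)$, contradicting the smallness of that integral. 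Once this is in place, part (ii) is routine; the only thing worth noticing there is that it is $\|\partial_\sigma f\|_{L^2}$, and not $\|f\|_{L^2}$, that controls $e^{it\partial_\sigma^2}f-f$.
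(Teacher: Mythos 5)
Your proof is correct and follows essentially the same route as the paper: part (i) is the standard Ginzburg--Landau ``clearing-out'' argument (a large pointwise deviation of $|f|$ from $1$ propagates over an interval of definite length via Cauchy--Schwarz on $\partial_\sigma f$, forcing a lower bound on the potential part of the energy that contradicts $\mathcal E(f)\leq\eta_1$), and part (ii) is the Fourier-multiplier bound $|e^{-it\xi^2}-1|\leq\min(t\xi^2,2)$ combined with the one-dimensional Agmon inequality. The only cosmetic difference is that you apply Cauchy--Schwarz to $\partial_\sigma(|f|^2)$, which obliges you to establish boundedness of $f$ first, whereas the paper's variant $\bigl||f(\sigma)|-|f(\sigma_0)|\bigr|\leq\|\partial_\sigma f\|_{L^2}\,|\sigma-\sigma_0|^{1/2}$ sidesteps that preliminary step.
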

\begin{proof}
i) The function $a(x)=x-1-\ln x$ is positive and convex, and vanishes only at  $x=1$, therefore we can adapt 
standard arguments already used in the context of Ginzburg-Landau-type functionals (see e.g. \cite{BBH}). 
More precisely,
we assume by contradiction that  $\left||f(\sigma_0)|^2-1\right|>1/4$ for some $\sigma_0\in \R$.
For example, $|f(\sigma_0)|>\sqrt{5/4}$. Next, since $\|\partial_\sigma f\|_{L^2}^2\leq 2\mathcal E(f)$ 
we have
by Cauchy-Schwarz inequality
\begin{equation*}
 |f(\sigma)|\geq |f(\sigma_0)|-\left|\int_{\sigma_0}^{\sigma} \partial_xf(x)dx\right|
\geq \sqrt{\frac{{5}}{4}}-\sqrt{2\mathcal E(f)|\sigma-\sigma_0|}.
\end{equation*}
It follows that $|f|>\sqrt{9/8}$ on $I=[\sigma_0-1/(500\mathcal{E}(f)),\sigma_0+1/(500 \mathcal{E}(f))]$. Therefore
\begin{equation*}
 \mathcal{E}(f)\geq \frac{1}{2}a\left(\frac{9}{8}\right)|I|=\frac{1}{500 \mathcal E(f)}a\left(\frac{9}{8}\right),
\end{equation*}
a contradiction if $\mathcal E(f)\leq \eta_1$ is sufficiently small.

ii) The property ii) is a known one used in 
the Gross-Pitaevskii study (see Lemma 3 in \cite{PG0}) to which we recall the short proof: the Fourier transform of $e^{it\partial_\sigma^2}f-f$ can 
be written as $\frac{e^{-it\xi^2}-1}{\xi}\,\xi\hat f(\xi)$, 
so the $L^2$ norm is bounded by $C\sqrt{t}\|\partial_\sigma f\|_{L^2}$ and the $\dot H^1$ norm 
is bounded by $C\|\partial_\sigma f\|_{L^2}$, i.e. 
$$\|e^{it\partial_\sigma^2}f-f\|_{H^1}\leq C(1+\sqrt{t})
\|\partial_\sigma f\|_{L^2}\leq  C(1+\sqrt{t}) \eta_1.$$
We choose $\eta_1$ small enough and $t_1$ small with respect to $ \eta_1$ such that for  $0\leq t\leq t_1$,
$$\|e^{it\partial_\sigma^2}f-f\|_{H^1}\leq \frac 14,$$
and the conclusion of the Lemma follows.
\end{proof}

Since $(x-1)^2/2\leq x-1-\ln x\leq 10(x-1)^2$ on $[3/4,5/4]$ we immediately obtain a second lemma.
\begin{lemma}\label{lemma:comp}If $\||f|^2-1\|_{L^\infty}\leq 1/4$ then we can compare the energies: 
$$\mathcal E_{GP}(f)
\equiv\frac{1}{2}\|\partial_\sigma f\|_{L^2}^2+\frac{\omega}{4}\||f|^2-1\|_{L^2}^2\leq 
\mathcal E(f)\leq 5\,\mathcal E_{GP}(f).$$
\end{lemma}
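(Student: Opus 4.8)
The plan is to reduce the two-sided comparison to the scalar inequality displayed just above the statement, exploiting the fact that the two energies differ only through their potential parts. Writing $K=\tfrac12\|\partial_\sigma f\|_{L^2}^2\ge 0$ for the common kinetic term, $P=\tfrac{\omega}{2}\int\bigl(|f|^2-1-\ln|f|^2\bigr)$ and $P_{GP}=\tfrac{\omega}{4}\int\bigl(|f|^2-1\bigr)^2$ for the two potentials, one has $\mathcal E(f)=K+P$ and $\mathcal E_{GP}(f)=K+P_{GP}$ with the \emph{identical} $K$. Thus the asserted chain $\mathcal E_{GP}(f)\le \mathcal E(f)\le 5\,\mathcal E_{GP}(f)$ follows once $P$ is trapped between $P_{GP}$ and $5P_{GP}$, after which the common $K$ is reinserted.

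First I would invoke the hypothesis: $\||f|^2-1\|_{L^\infty}\le 1/4$ says precisely that $x:=|f(\sigma)|^2\in[3/4,5/4]$ for every $\sigma\in\R$, which is exactly the range on which the elementary inequality $\tfrac12(x-1)^2\le x-1-\ln x\le \tfrac52(x-1)^2$ holds (the displayed constant $10$ is a loose over-estimate; on this compact interval the true upper constant is below $1$, so $\tfrac52$ is comfortably valid). Substituting $x=|f(\sigma)|^2$, multiplying by $\omega/2>0$ (recall $\omega>0$ is a standing assumption), and integrating in $\sigma$ converts the pointwise chain into $P_{GP}\le P\le 5\,P_{GP}$; the factor $5$ on the right arises because the mismatch between the coefficient $\omega/2$ in $P$ and $\omega/4$ in $P_{GP}$ doubles the scalar constant $\tfrac52$.

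Finally I would reinsert the kinetic term, and this is where the nonnegativity $K\ge 0$ is used. For the lower estimate, adding $K$ to $P_{GP}\le P$ gives $\mathcal E_{GP}(f)=K+P_{GP}\le K+P=\mathcal E(f)$ at once. For the upper estimate, from $P\le 5\,P_{GP}$ together with $K\le 5K$ I obtain $\mathcal E(f)=K+P\le 5K+5\,P_{GP}=5\,\mathcal E_{GP}(f)$; without $K\ge 0$ one could not absorb the multiple onto the gradient part, so this nonnegativity is exactly what makes the factor $5$ work uniformly.

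I do not anticipate any genuine obstacle: the lemma is ``immediate'' once the scalar inequality on $[3/4,5/4]$ is granted, and that inequality is the only nontrivial input. I would justify it by analysing $a(x)=x-1-\ln x$, which is nonnegative and strictly convex, vanishes to second order at its unique zero $x=1$ (so $a(1)=a'(1)=0$ and $a''(x)=x^{-2}$), and is therefore comparable to $(x-1)^2$ on any compact subinterval avoiding $0$. Concretely, the quotient $a(x)/(x-1)^2$ extends continuously by the value $1/2$ at $x=1$ and is monotone on $[3/4,5/4]$, so its extremal values are attained at the endpoints and pin down the admissible constants. It is these constants, together with the coefficient adjustment $\omega/2$ versus $\omega/4$ and the absorption by $K\ge 0$, that produce the numerical factors $1$ and $5$ in the statement.
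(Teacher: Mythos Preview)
Your proposal is correct and follows essentially the same route as the paper: both reduce the comparison to the pointwise scalar inequality for $a(x)=x-1-\ln x$ on $[3/4,5/4]$, integrate, and reinsert the common kinetic term. You are in fact more careful than the paper, which quotes the loose upper constant $10$; as you observe, one actually needs $a(x)\le \tfrac{5}{2}(x-1)^2$ on this interval (which holds, since $\sup_{[3/4,5/4]} a(x)/(x-1)^2<1$) in order to land on the factor $5$ after adjusting $\omega/2$ versus $\omega/4$ and absorbing $K\le 5K$.
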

So, if we consider an initial perturbation such that  $\Phi_0-1$ is sufficiently small in $H^1$, we infer from Sobolev 
embedding that
 $\mathcal E_{GP}(\Phi_0)<\infty$ and that $\| |\Phi_0|^2-1\|_{L^\infty}<1/4$. Hence Lemma \ref{lemma:comp} 
ensures that $\Phi_0$ belongs to the energy space associated to Eq. \eqref{eq:BM}.

\medskip

We will also need the following transposition of a standard property of the Gross-Pitaevskii energy (see \cite{Ga, PG0,PG}).
\begin{lemma}
 \label{lemma:recall}
Let $f$ such that $\mathcal{E}(f)\leq \eta_1$, with $\eta_1$ defined in Lemma \ref{lemma:ginzburg}. 
Let $h\in H^1(\R)$ with $\|h\|_{H^1}\leq 1/2$. Then  the energy $\mathcal{E}(f+h)$ is finite. More precisely we have,
for absolute numerical constants $C,C'$,
\begin{equation*}\begin{split}
\mathcal{E}(f+h)&\leq C\mathcal{E}_{GP}(f+h)
\leq C'\left(1+\mathcal{E}(f)\right)\left(1+\|h\|_{H^1}^2\right).\end{split}\end{equation*}  Moreover, 
\begin{equation*}
 \||f+h|-1\|_{L^\infty}\leq \frac{2+\sqrt{2}}{4}<1.
\end{equation*}
\end{lemma}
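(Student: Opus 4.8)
The plan is to reduce the whole statement to a single structural fact, namely a two-sided pointwise control of $|f+h|$, and then to exploit the elementary comparison between $x-1-\ln x$ and $(x-1)^2$ that already underlies Lemma \ref{lemma:comp}. First I would prove the $L^\infty$ bound on $|f+h|-1$, which is also the ingredient everything else rests on. Since $\mathcal{E}(f)\leq\eta_1$, Lemma \ref{lemma:ginzburg}(i) gives $\||f|^2-1\|_{L^\infty}\leq 1/4$, hence $\||f|-1\|_{L^\infty}\leq\||f|^2-1\|_{L^\infty}\leq 1/4\leq 1/2$ (using $||f|-1|(|f|+1)=||f|^2-1|$ and $|f|+1\geq 1$), and in particular $\|f\|_{L^\infty}\leq\sqrt{5}/2$. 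On the other hand the one-dimensional Gagliardo--Nirenberg inequality $\|h\|_{L^\infty}^2\leq\|h\|_{L^2}\|\partial_\sigma h\|_{L^2}$ together with $\|h\|_{H^1}\leq 1/2$ yields $\|h\|_{L^\infty}\leq\frac{1}{\sqrt2}\|h\|_{H^1}\leq\frac{\sqrt2}{4}$. The triangle inequality $||f+h|-1|\leq||f|-1|+|h|$ then gives $\||f+h|-1\|_{L^\infty}\leq\frac12+\frac{\sqrt2}{4}=\frac{2+\sqrt2}{4}<1$, as claimed. The byproduct I really want is that $|f+h|$ takes values in a fixed compact interval $[c_0,c_1]\subset(0,\infty)$, with $c_0=\frac{2-\sqrt2}{4}>0$; this replaces the hypothesis $\||\cdot|^2-1\|_{L^\infty}\leq 1/4$ of Lemma \ref{lemma:comp}, which is \emph{not} available for $f+h$.

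Next I would establish the first inequality $\mathcal{E}(f+h)\leq C\,\mathcal{E}_{GP}(f+h)$. One cannot apply Lemma \ref{lemma:comp} verbatim, since $\||f+h|^2-1\|_{L^\infty}$ need not be $\leq 1/4$. However, on the compact interval $[c_0^2,c_1^2]\subset(0,\infty)$ the positive function $a(x)=x-1-\ln x$ satisfies $a(x)\leq K(x-1)^2$ for an absolute constant $K=K(c_0,c_1)$: indeed $a(x)/(x-1)^2$ extends continuously (with value $1/2$ at $x=1$) to a bounded function on $[c_0^2,c_1^2]$. Applying this pointwise with $x=|f+h|^2$ and adding the identical kinetic terms gives $\mathcal{E}(f+h)\leq\max(1,2K)\,\mathcal{E}_{GP}(f+h)$. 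Since the right-hand side will be shown finite, this simultaneously proves $\mathcal{E}(f+h)<\infty$ and that $\ln|f+h|^2$ is integrable.

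Finally I would bound $\mathcal{E}_{GP}(f+h)$. For the kinetic part, $\frac12\|\partial_\sigma(f+h)\|_{L^2}^2\leq\|\partial_\sigma f\|_{L^2}^2+\|\partial_\sigma h\|_{L^2}^2\leq 2\mathcal{E}(f)+\|h\|_{H^1}^2$, using that the potential part of $\mathcal{E}$ is nonnegative. For the potential part I expand
\[
|f+h|^2-1=(|f|^2-1)+2\re(\bar f h)+|h|^2
\]
and use $(a+b+c)^2\leq 3(a^2+b^2+c^2)$, estimating $\||f|^2-1\|_{L^2}^2\leq\frac{4}{\omega}\mathcal{E}(f)$ (from $\mathcal{E}_{GP}(f)\leq\mathcal{E}(f)$, Lemma \ref{lemma:comp}), $\|\re(\bar f h)\|_{L^2}\leq\|f\|_{L^\infty}\|h\|_{L^2}\leq\frac{\sqrt5}{2}\|h\|_{L^2}$, and $\||h|^2\|_{L^2}^2=\|h\|_{L^4}^4\leq\|h\|_{L^\infty}^2\|h\|_{L^2}^2\leq\frac12\|h\|_{H^1}^4$. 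Collecting terms and using $\|h\|_{H^1}\leq 1/2$ to absorb $\|h\|_{H^1}^4\leq\frac14\|h\|_{H^1}^2$, I obtain an additive bound $\mathcal{E}_{GP}(f+h)\leq C(\mathcal{E}(f)+\|h\|_{H^1}^2)$ (with $C$ possibly depending on the fixed $\omega$, since the cross and quartic terms carry a factor $\omega$), and this immediately gives the stated product bound because $\mathcal{E}(f)+\|h\|_{H^1}^2\leq(1+\mathcal{E}(f))(1+\|h\|_{H^1}^2)$.

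The main obstacle is conceptual rather than computational. Because $h$ is only small in $H^1$ and not in energy, $|f+h|^2-1$ is no longer uniformly small, so Lemma \ref{lemma:comp} is unavailable and the logarithm $\ln|f+h|^2$ cannot be controlled naively. Everything hinges on the first step: the \emph{two-sided} pointwise estimate that keeps $|f+h|$ inside a compact subset of $(0,\infty)$, thereby both guaranteeing integrability of the logarithmic term and producing the fixed comparison $x-1-\ln x\asymp(x-1)^2$ on the relevant range. Once that is in place, the remaining estimates are routine Sobolev and Gagliardo--Nirenberg bounds.
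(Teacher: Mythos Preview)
Your argument is correct and follows essentially the same route as the paper: first the $L^\infty$ bound on $|f+h|-1$ via Lemma~\ref{lemma:ginzburg}(i) and Gagliardo--Nirenberg, then the comparison $\mathcal{E}(f+h)\leq C\,\mathcal{E}_{GP}(f+h)$, and finally a direct estimate of $\mathcal{E}_{GP}(f+h)$ (for which the paper simply cites \cite{PG0}). Your observation that Lemma~\ref{lemma:comp} does not literally apply because $\||f+h|^2-1\|_{L^\infty}$ may exceed $1/4$, and that one instead uses the two-sided bound $|f+h|\in[c_0,c_1]\subset(0,\infty)$ to get $a(x)\leq K(x-1)^2$ on the relevant range, is in fact more careful than the paper's own phrasing.
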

\begin{proof}
 We first infer from Lemma \ref{lemma:ginzburg} i) that $\||f|-1\|_{L^\infty}\leq 1/4$, and from Lemma \ref{lemma:comp} that
$\mathcal{E}_{GP}(f)<\infty$. Next, applying Gagliardo-Nirenberg 
inequality we get 
$\|h\|_{L^\infty}\leq \sqrt{2}\|h\|_{H^1}\leq \sqrt{2}/2$, so that $\||f+h|-1\|_{L^\infty}\leq (2+\sqrt{2})/4<1$. By 
Lemma \ref{lemma:comp} it follows that
$\mathcal{E}(f+h)\leq C\mathcal{E}_{GP}(f+h)$. 
Using that  $\mathcal{E}_{GP}(f)<\infty$ and $h\in H^1$ as well as Sobolev inequalities 
 we conclude that $\mathcal{E}_{GP}(f+h)$ is finite, with the corresponding estimate 
(see also, e.g., Lemma 2 in \cite{PG0}). 
\end{proof}

\subsection{Proof of Theorem \ref{thm:main-2}}

First we will establish local well-posedness for Eq. \eqref{eq:BM} by performing a fixed point argument for the operator
$$A(w)(t)=i\int_0^t e^{i(t-\tau)\partial_\tau^2}
\frac{e^{i\tau\partial_\sigma^2}\Phi_0+w(\tau)}{|e^{i\tau\partial_\sigma^2}\Phi_0+
w(\tau)|^2}\left(1-|e^{i\tau\partial_\sigma^2}\Phi_0+w(\tau)|^2\right)\,d\tau$$
on the ball
$$B_T
=\left\{ w\in C\left([0,T],H^1\right),\quad \sup_{0\leq t\leq T}\|w(t)\|_{H^1}\leq\frac 14\right\},$$
with $T$ small to be chosen later. Then $\Phi(t)=e^{it\partial_\sigma^2}\Phi_0+w(t)$ will be a 
solution for \eqref{eq:BM} on $[0,T]$  with initial data $\Phi_0$. Observe that the proof of Lemma \ref{lemma:ginzburg} ii)
yields that $t\mapsto (e^{it\partial_\sigma^2 }\Phi_0-\Phi_0)\in C([0,T],H^1(\R))$.
So the map $\Phi$ will belong to
 the energy space if $\Phi_0$ belongs to the energy space (by Lemma \ref{lemma:recall}
applied to $f=\Phi_0$ and $h=e^{it\partial_\sigma^2}\Phi_0-\Phi_0+w(t)$ 
for $T\leq t_1$ with $t_1$ from Lemma \ref{lemma:ginzburg}), and it will belong
to $1+H^1(\R)$ if $\Phi_0$ is in $1+H^1(\R)$. 

\medskip

The hypothesis of Theorem \ref{thm:main-2} is that we start with $\Phi_0$ verifying
 $$\mathcal E=\mathcal E(\Phi_0)= \frac{1}{2}\|\partial_\sigma \Phi_0\|_{L^2}^2+\frac{\omega}{2}\int
\left(-\ln |\Phi_0|+|\Phi_0|^2-1\right)\leq \eta_1.$$
We first impose $T\leq t_1$, with $t_1$ defined in Lemma \ref{lemma:ginzburg}. Let $w\in B_T$, and set for $0\leq t\leq T$
\begin{equation*}
 \tilde{\Phi}(t)=e^{it\partial_\sigma^2}\Phi_0+w(t)
=\Phi_0+\left(e^{it\partial_\sigma^2}\Phi_0-\Phi_0+w(t)\right).
\end{equation*}
By Lemma \ref{lemma:ginzburg} ii) and by choice of $B_T$, we have 
 $\|\tilde{\Phi}(t)-\Phi_0\|_{H^1}\leq 1/2$ on $[0,T]$.
Therefore, applying Lemma \ref{lemma:recall} to $f=\Phi_0$ and
$h=\tilde{\Phi}(t)-\Phi_0$ we obtain that $
\||\tilde{\Phi}(t)|-1\|_{L^\infty}\leq (2+\sqrt{2})/4$ on $[0,T]$.
In particular, since $C^{-1}\leq|\tilde{\Phi}|\leq C$ for $C>0$ we can 
estimate the action of the operator as follows
\begin{equation*}\begin{split}\|A&(w)(t)\|_{H^1}\leq t\sup_{0\leq \tau\leq t}\left\|\frac{\tilde{\Phi}(\tau)}{|\tilde{\Phi}(\tau)|^2}\left(1-|\tilde{\Phi}(\tau)|^2\right)
\right\|_{H^1}\\
&\leq C\,t\sup_{0\leq \tau\leq t}\left(\left\|1-|\tilde{\Phi}(\tau)|^2\right\|_{L^2}
+\left\|\partial_\sigma \tilde{\Phi}(\tau)\right\|_{L^2}\right)\\
&\leq C\,t\sup_{0\leq \tau\leq t} \sqrt{\mathcal{E}_{GP}}(\tilde{\Phi}(\tau)).\end{split}\end{equation*}
We use again Lemma  \ref{lemma:recall} and the bound 
$\|\tilde{\Phi}(\tau)-\Phi_0\|_{H^1}\leq 1/2$ to obtain
\begin{equation*}\begin{split}\sup_{0\leq t\leq T}\|A(w)(t)\|_{H^1}&\leq
 C\,T(1+\mathcal E).\end{split}\end{equation*}
Arguing similarly, we readily check that for $w_1,w_2\in B_T$
\begin{equation*}\begin{split}
\sup_{0\leq t\leq T}
\|A(w_1)(t)-A(w_2)(t)\|_{H^1}&\leq C\,T(1+\mathcal E)\sup_{0\leq t\leq T}\|w_1(t)-w_2(t)\|_{H^1}.\end{split}\end{equation*}
Hence imposing a second smallness condition on $T$ with respect to
 $\mathcal E$ we obtain a 
fixed point $w$ for $A$ in $B_T$. Therefore local well-posedness holds for Eq. \eqref{eq:BM} on $[0,T]$ with $T$ depending
only on $\mathcal{E}$.

\medskip

Next, since the energy of 
Eq. \eqref{eq:BM} is conserved
$$\mathcal E(\Phi(T))=\mathcal E(\Phi(0))=\mathcal E,$$
we re-iterate the local in time argument to get the global existence. Finally, Lemma \ref{lemma:ginzburg} insures us that
$$\sup_{t\in \R}\||\Phi(t)|^2-1\|_{L^\infty}\leq \frac 14,$$
so the solution satisfies indeed
$$\frac 14\leq |\Phi(t,\sigma)|\leq \frac 54,\quad t,\sigma\in \R.$$

\section{Proof of Theorem \ref{thm:main}}
\label{sec:losange}

\subsection{Some useful quantities}
From now on we will write
 $\P_{jk}=\P_j-\P_k$, 
$X_{jk}=X_j-X_k$ and $u_{jk}=u_j-u_k$.

We first introduce some useful quantities.  In the general 
case where $N\geq 1$ and $\Gamma_j\in \R$, 
the dynamics of Syst. \eqref{syst:interaction} preserves the following quantities. 

The energy 
\begin{equation*}
\frac{1}{2} \sum_{j}
  \Gamma_j^2 \int \left|\partial_\sigma \P_j(t,\sigma)\right|^2\,d\sigma
-\frac{1}{2}\sum_{j\neq k} \Gamma_j \Gamma_k \int \ln \left|\P_{jk}(t,\sigma)\right|^2\,d\sigma,
\end{equation*}
the angular momentum
\begin{equation*}
 \sum_j \Gamma_j \int \left|\P_j(t,\sigma)\right|^2\,d\sigma,
\end{equation*}
and
\begin{equation*}
\sum_{j\neq k}\Gamma_j \Gamma_k \int \left|\P_{jk}(t,\sigma)\right|^2\,d\sigma.
\end{equation*}

However the previous quantities are not well-defined in the framework of Theorem \ref{thm:main}, not even formally, since 
 $\Psi_j(t,\sigma)$ and 
$\Psi_{jk}(t,\sigma)$ do not tend to zero at infinity. As in \cite{KPV}, we modify them in order to get well-defined
quantities, introducing
\begin{equation*}\begin{split}
 \mathcal{H}&=\frac{1}{2}\sum_{j}
  \Gamma_j^2 \int \left|\partial_\sigma \P_j(t,\sigma)\right|^2\,d\sigma
-\frac{1}{2}\sum_{j\neq k} \Gamma_j \Gamma_k \int \ln\left(\frac{|\P_{jk}(t,\sigma)|^2}{|X_{jk}(t)|^2}\right)\,d\sigma\\
 \mathcal{A}&=\sum_j \Gamma_j \int \left(|\P_j(t,\sigma)|^2-|X_j(t)|^2\right)\,d\sigma\\
 \mathcal{T}&=\sum_{j\neq k}\Gamma_j \Gamma_k \int \left(|\P_{jk}(t,\sigma)|^2-|X_{jk}(t)|^2\right)\,d\sigma.\end{split}
\end{equation*}
Note that, in view of the properties of the point vortex system \eqref{syst:point-vortex} mentioned
in the introduction, the renormalized quantities $\mathcal{H}$, $\mathcal{A}$ and $\mathcal{T}$
are still formally preserved in time.

Finally, we also introduce the time-dependent quantity
\begin{equation*}
 \mathcal{I}(t)=\frac{1}{2}\sum_{j\neq k}\Gamma_j \Gamma _k \int \left( \frac{|\P_{jk}(t)|^2}{|X_{jk}(t)|^2}-1\right)\,d\sigma,
\end{equation*}
and we consider the energy 
\begin{equation}\label{def:energy-losange-bis}
\mathcal E(t)=\mathcal H+\mathcal I(t),\end{equation}
which have been already introduced in \eqref{def:energy-losange} in the introduction.

As noticed in \cite{KPV}, a useful consequence of the convexity estimate 
$(x-1)^2/2\leq x-1-\ln x\leq 10(x-1)^2$ on $[3/4,5/4]$ is the inequality
\begin{equation}
 \label{ineq:plus-loin}
\frac{1}{2}\sum_j  \Gamma_j^2 \int \left|\partial_\sigma \P_j(t,\sigma)\right|^2\,d\sigma
+\frac{1}{4}\sum_{j\neq k}\Gamma_j\Gamma_k \int \left(\frac{|\P_{jk}(t,\sigma)|^2}{|X_{jk}(t)|^2}-1\right)^2\,d\sigma
\leq \mathcal{E}(t),
\end{equation}
which holds as long as the filaments satisfy $3/4\leq |\Psi_{jk}(t)|^2/|X_{jk}(t)|^2\leq 5/4$. 

\subsection{The approach}
In this subsection we briefly sketch how to combine 
elements from \cite{KPV} and from \S2 to prove local existence and uniqueness of a solution to 
Syst. \eqref{syst:interaction} in the 
general case of $N$ filaments, with $N\geq 2$, and the way to extend this solution as long as the energy $\mathcal E(t)$
remains sufficiently small. Here we take positive circulations
$$\Gamma_j>0,\quad 1\leq j\leq N.$$
Therefore there exists a unique global solution $(X_j)_j$ to Syst. 
\eqref{syst:point-vortex}. We denote by $d>0$  the minimal 
distance between the point vortices for all time. Here we shall make the extra-assumption that 
$$u_{j,0}=\Psi_{j,0}-X_{j,0}\in H^1(\R).$$

\medskip

We look for a solution $u=(u_j)_j\in C([0,T],H^1(\R))^N$ to the system
\begin{equation}
 \label{syst:pert-u}
\begin{cases}
\displaystyle i\partial_t u_j + \Gamma_j\partial_\sigma^2 u_j+\sum_{k\neq j} \Gamma_k 
\left(\frac{ X_{jk}+u_{jk}}{|X_{jk}+u_{jk}|^2}-\frac{ X_{jk}}{|X_{jk}|^2}\right)=0\\
\displaystyle u_j(0)=u_{j,0},\quad 1\leq j\leq N. 
\end{cases}
\end{equation}
By similar arguments as in Section \S\ref{sec:sym}, our purpose is to find a fixed point in the Banach space
\begin{equation*}
B_T=\left\{ w=(w_1,\ldots,w_N)\in C\left([0,T],H^1\right)^N,\quad \sup_{0\leq t\leq T} \|w(t)\|_{H^1}
\leq \frac{d}{4}\right\} 
\end{equation*}
for the operator $A(w)=(A_j(w))_j$ defined by
\begin{equation*}
A_j(w)(t)=i\int_0^t 
\sum_{k\neq j} \Gamma_k 
\left(\frac{ X_{jk}(\tau)+e^{i\tau  \Gamma_j\partial_\sigma^2}u_{j,0}+w_{j}(\tau)-e^{i\tau  \Gamma_k\partial_\sigma^2}u_{k,0}-w_{k}(\tau)}
{|X_{jk}(\tau)+e^{i\tau  \Gamma_j\partial_\sigma^2}u_{j,0}+w_{j}(\tau)-e^{i\tau  \Gamma_k\partial_\sigma^2}u_{k,0}-w_{k}(\tau)|^2}
-\frac{ X_{jk}(\tau)}{|X_{jk}(\tau)|^2}\right)\,d\tau,
\end{equation*}
and for $T$
sufficiently small with respect to $\eta_2$,  $\sum_j \|u_{j,0}\|_{H^1},$ $(\Gamma_j)_j$  and $d$.

Then as in Section \S\ref{sec:sym} the solution will be given by
\begin{equation*}
 u_{j}(t)=e^{it \Gamma_j\partial_\sigma^2}u_{j,0}+w_{j}(t).
\end{equation*}

By transposing the arguments of Section \S\ref{sec:sym} we obtain the following local well-posedness result.
\begin{lemma}\label{lemma:lwp}
 Let $(u_{j,0})_j\in H^1(\R)^N$ be such that $\mathcal{E}_0< 10\eta_2$, with $\mathcal{E}_0$ defined in Theorem 
\ref{thm:main} and $\eta_2=\eta_2(d)$ a small constant depending only on $d$.
 There exists $T>0$, depending only on $\eta_2$, $\sum_j \|u_{j,0}\|_{H^1}$, $(\Gamma_j)_j$ and $d$,  and there exists 
a unique solution $(u_j)_j\in C([0,T],H^1(\R))^N$ to Syst. \eqref{syst:pert-u} satisfying
\begin{equation*}
\sup_{0\leq t\leq T}  \|u_j(t)\|_{H^1}\leq \|u_{j,0}\|_{H^1}+\frac{d}{4},\quad 1\leq j\leq N.
\end{equation*}
 
Moreover we can choose $T$ such that 
\begin{equation*}
 T\big(1+\eta_2+\sum_j\|u_{j,0}\|_{H^1}\big)\geq C(d,(\Gamma_j)_j)
\end{equation*}
for some  constant $C(d,(\Gamma_j)_j)$ depending only on $d$ and $(\Gamma_j)_j$.

\end{lemma}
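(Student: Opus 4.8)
The plan is to prove Lemma~\ref{lemma:lwp} by the Banach fixed point scheme sketched just above, exactly as in the proof of Theorem~\ref{thm:main-2}: one seeks $w=(w_j)_j$ in the ball $B_T$ for the operator $A=(A_j)_j$, after which $u_j(t)=e^{it\Gamma_j\partial_\sigma^2}u_{j,0}+w_j(t)$ solves Syst.~\eqref{syst:pert-u} on $[0,T]$. The only genuinely new ingredient compared with Section~\S\ref{sec:sym} is a multi-filament analogue of the coercivity Lemma~\ref{lemma:ginzburg}~i), needed because the $N$ straight filaments $X_j$ are no longer mutually equidistant.

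\emph{Step 1: geometry from a small energy.} The first task is to convert $\mathcal{E}_0<10\eta_2$ into quantitative information on the initial configuration. Since $\partial_\sigma\P_{j,0}=\partial_\sigma u_{j,0}$ and $|X_{jk,0}|\ge d$ for $j\neq k$, the definition of $\mathcal{E}_0$ bounds $\sum_j\|\partial_\sigma u_{j,0}\|_{L^2}^2\le 2\mathcal{E}_0$ and, for each pair $j\neq k$, $\int a(|\P_{jk,0}|^2/|X_{jk,0}|^2)\le 2\mathcal{E}_0$, where $a(x)=x-1-\ln x\ge 0$ is convex and vanishes only at $x=1$; moreover $\|\partial_\sigma(\P_{jk,0}/X_{jk,0})\|_{L^2}\le C(d)\sqrt{\mathcal{E}_0}$. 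Running pairwise the Cauchy--Schwarz/contradiction argument of the proof of Lemma~\ref{lemma:ginzburg}~i) on the ratios $|\P_{jk,0}(\sigma)|^2/|X_{jk,0}|^2$ then gives, for $\eta_2=\eta_2(d)$ small enough,
\[
\left\|\frac{|\P_{jk,0}|^2}{|X_{jk,0}|^2}-1\right\|_{L^\infty}\le\frac14,\qquad\text{hence}\qquad |\P_{jk,0}(\sigma)|\ge\frac{\sqrt{3}}{2}\,d\quad(\sigma\in\R),
\]
and in particular $\mathcal{E}_0\le C(d)\sum_j\|u_{j,0}\|_{H^1}^2<\infty$ by Lemma~\ref{lemma:comp}.

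\emph{Step 2: the fixed point.} Next one must keep the nonlinearity away from its singularity at $0$ throughout the iteration. For $w\in B_T$ set $v_j(\tau)=e^{i\tau\Gamma_j\partial_\sigma^2}u_{j,0}+w_j(\tau)$, so $v_j(\tau)-u_{j,0}=(e^{i\tau\Gamma_j\partial_\sigma^2}u_{j,0}-u_{j,0})+w_j(\tau)$; by (the proof of) Lemma~\ref{lemma:ginzburg}~ii) the free-evolution piece is $\le C(1+\sqrt T)\|\partial_\sigma u_{j,0}\|_{L^2}\le C\sqrt{\eta_2}$ in $H^1$ once $T$ is at most an absolute constant, while $\|w_j(\tau)\|_{H^1}\le d/4$, so $\|v_{jk}(\tau)-u_{jk,0}\|_{L^\infty}\le\sqrt{2}(\tfrac{d}{2}+C\sqrt{\eta_2})$ by Gagliardo--Nirenberg. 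Combined with $|X_{jk}(\tau)-X_{jk,0}|=O(T)$ — the point vortices, having positive circulations, move continuously, stay at mutual distance $\ge d$, and have positions bounded uniformly in time (conservation of $\sum_j\Gamma_j|X_j|^2$) — and with the lower bound of Step~1 (note $\sqrt{2}\,\tfrac{d}{2}<\tfrac{\sqrt{3}}{2}d$), this keeps, after further shrinking $T$ and $\eta_2$,
\[
c(d)\le |X_{jk}(\tau)+v_{jk}(\tau,\sigma)|\le C(d,(\Gamma_j)_j,\textstyle\sum_j\|u_{j,0}\|_{H^1}),\qquad 0\le\tau\le T,\ \sigma\in\R.
\]
For the fixed-point bounds themselves I would avoid the mean value theorem (whose segment could graze $0$) and use instead, identifying $\R^2\cong\C$ with $F(z)=z/|z|^2=1/\bar z$, the exact identity $F(z)-F(z')=\overline{(z'-z)}/(\bar z\,\overline{z'})$, which only sees the lower bounds $|z|,|z'|\ge c(d)$; together with $\partial_\sigma X_{jk}=0$, the estimate $|DF|\le c(d)^{-2}$ on $\{|z|\ge c(d)\}$, and the $H^1(\R)$-algebra and Gagliardo--Nirenberg inequalities, this yields $\|A_j(w)(t)\|_{H^1}\le t\,C(d,(\Gamma_j)_j)(1+\sum_j\|u_{j,0}\|_{H^1})$, and — since the free evolutions cancel in $v_{jk}^1-v_{jk}^2=w_{jk}^1-w_{jk}^2$ — the matching Lipschitz bound $\|A_j(w^1)(t)-A_j(w^2)(t)\|_{H^1}\le t\,C(d,(\Gamma_j)_j)(1+\sum_j\|u_{j,0}\|_{H^1})\sup_{[0,T]}\|w^1-w^2\|_{H^1}$. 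Imposing $t\,C(d,(\Gamma_j)_j)(1+\sum_j\|u_{j,0}\|_{H^1})\le\min\{d/4,1/2\}$ — which permits choosing $T$ as large as $\sim(1+\sum_j\|u_{j,0}\|_{H^1})^{-1}$, that is precisely so that $T(1+\eta_2+\sum_j\|u_{j,0}\|_{H^1})\ge C(d,(\Gamma_j)_j)$, the $\eta_2$ being harmless since small — makes $A$ a contraction of $B_T$ into itself. Banach's theorem then provides the unique $w\in B_T$, the solution $u_j=e^{it\Gamma_j\partial_\sigma^2}u_{j,0}+w_j\in C([0,T],H^1(\R))$ with $\sup_{[0,T]}\|u_j(t)\|_{H^1}\le\|u_{j,0}\|_{H^1}+d/4$, and uniqueness in the full class by a short Gronwall argument once the filaments are known to stay separated, which holds on $[0,T]$ by Step~2 and the choice of $B_T$.

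I expect the main obstacle to be Step~1 together with its coupling to the iteration: making the Ginzburg--Landau coercivity argument work \emph{simultaneously} for all pairs $j\neq k$, with the threshold $\eta_2$ and all constants depending only on $d$ and $(\Gamma_j)_j$, and then keeping the arguments of $z\mapsto z/|z|^2$ uniformly away from $0$ in $(\tau,\sigma)$ and in $w\in B_T$. It is this last point that forces the radius of $B_T$ to be $d/4$ — so that $\sqrt{2}\,\tfrac{d}{2}<\tfrac{\sqrt{3}}{2}d$ still leaves room for the $O(\sqrt{\eta_2})$ free-evolution and $O(T)$ vortex-motion corrections — and that ties the smallness of $\eta_2$ to the minimal separation $d$, in contrast with the absolute constants of Theorem~\ref{thm:main-2}.
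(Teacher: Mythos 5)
Your proposal is correct and follows essentially the same route as the paper: a contraction on the ball $B_T$ of radius $d/4$, with the energy coercivity of Lemma \ref{lemma:ginzburg} i) applied to the ratios $\Psi_{jk,0}/X_{jk,0}$ to pin down the initial separation, Lemma \ref{lemma:ginzburg} ii) for the free-evolution deviation, and the bounded point-vortex velocities for $|X_{jk}(\tau)-X_{jk,0}|=O(T)$, yielding the uniform lower bound on $|X_{jk}+v_{jk}|$ and the stated lower bound on $T$. Your only departures are cosmetic (the exact identity $1/\bar z-1/\bar z'$ in place of the paper's mean value theorem, and re-running the Ginzburg--Landau argument pairwise rather than invoking the lemma on the quotient energy $\mathcal{E}(\Psi_{jk,0}/X_{jk,0})\le \mathcal{E}_0/\Gamma^2$).
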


\begin{remark}\label{rem:extension}
As a byproduct of Lemma \ref{lemma:lwp} we realize that the solution $(u_j)_j$ to \eqref{syst:pert-u} 
exists as long as the energy $\mathcal{E}(t)$ remains bounded by $10\eta_2$. 
Indeed note that
the norm $\sum_j\|u_j(t)\|_{H^1}$ can grow exponentially, but it cannot blow up as long as the energy is
sufficiently small.
\end{remark}

\begin{proof} Let 
$0<\Gamma\leq 1$ such that $0<\Gamma\leq \min_{j} \Gamma_j$.
Since all the $(\Gamma_j)'s$ are positive, we have
\begin{equation*}
 \max_{j\neq k}
\mathcal{E}\left(\frac{\Psi_{jk,0}}{X_{jk,0}}\right)\leq \frac{1}{\Gamma^2} \mathcal{E}_0,
\end{equation*} 
where we recall that  $\mathcal{E}$ is  defined by \eqref{def:energy-BM} (taking $\omega=1$).

In particular, if $\eta_2$ is such that
$10 \eta_2/\Gamma^2 \leq \eta_1$,
with $\eta_1$ defined in Lemma \ref{lemma:ginzburg}, then   
$3/4 \leq |\Psi_{jk,0}|/|X_{jk,0}|\leq 5/4$ for all $j\neq k$. 
Then we have for $w\in B_T$ 
\begin{equation*}
\begin{split}
&|X_{jk}(\tau)+e^{i\tau  \Gamma_j\partial_\sigma^2}u_{j,0}+w_{j}(\tau)-e^{i\tau  \Gamma_k\partial_\sigma^2}u_{k,0}-w_{k}(\tau)|\\
=&|\Psi_{jk,0}+(X_{jk}(\tau)-X_{jk,0})+(e^{i\tau \Gamma_j\partial_\sigma^2}u_{j,0}-u_{j,0})-(e^{i\tau \Gamma_k\partial_\sigma^2}u_{k,0}-u_{k,0})+w_{jk}(\tau)|\\
\geq&  |\Psi_{jk,0}|-|X_{jk}(\tau)-X_{jk,0}|-\sqrt{2}
\|(e^{i\tau \Gamma_j\partial_\sigma^2}u_{j,0}-u_{j,0})-(e^{i\tau \Gamma_k\partial_\sigma^2}u_{k,0}-u_{k,0})+w_{jk}(\tau)\|_{H^1}\\
\geq& \frac{3d}{4}-\frac{2(\sum_{j} \Gamma_j)}{d}T-C(1+T)\eta_2-\frac{\sqrt{2}d}{4}\geq \frac{d}{4}
\end{split}
\end{equation*}
provided that $\eta_2$ is small with respect to $d$, 
and that $T$ is small in terms of $\eta_2,d, (\Gamma_j)_j$.
In the last inequality we have used the proof of Lemma \ref{lemma:ginzburg} ii) together 
with the mean-value theorem for $X_{jk}$.
Now, since $X_{jk}(\tau)+e^{i\tau  \Gamma_j\partial_\sigma^2}u_{j,0}+w_{j}(\tau)-e^{i\tau  \Gamma_k\partial_\sigma^2}u_{k,0}-w_{k}(\tau)$ is bounded from below, direct estimates 
show that $A$ is a contraction on $B_T$ as long as
\begin{equation*}
 T(1+\eta_2+\sum_j\|u_{j,0}\|_{H^1})\leq C(d,(\Gamma_j)_j)
\end{equation*}
and the conclusion of  Lemma \ref{lemma:lwp} follows.

\end{proof}

\subsection{The proof of Theorem \ref{thm:main}} 
We present now the proof of Theorem \ref{thm:main}. By Remark \ref{rem:extension}, there exists a unique 
solution as long as $\mathcal E(t)$ remains sufficiently small.
 In the cases considered in \cite{KPV} where the  $|X_{jk}(t)|$ are all  the same and constant  equal to $d$, 
$\mathcal I(t)=\mathcal T/(2d^2)$ so $\mathcal E(t)$ is conserved. Also
 under the hypothesis of Theorem \ref{thm:main-2}, we have 
$$\mathcal{I}(t)=\frac{1}{2}\sum_{j\neq k}\Gamma_j\Gamma_k 
\int \left( \frac{|\P_{jk}(t)|^2}{|X_{jk}|^2}-1\right)\,d\sigma=\frac{1}{2}\sum_{j\neq k}\Gamma_j\Gamma_k 
\int \left(|\Phi(t,\sigma)|^2-1\right)\,d\sigma=\omega \mathcal{A},$$
so, although $|X_{jk}|$ are not all equal, $\mathcal{I}(t)$ and $\mathcal E(t)$ are still formally preserved.
In fact, under the assumptions of Theorem \ref{thm:main-2} we have
 $\mathcal{E}(t)=N \mathcal{E}(\Phi(t))$ so we retrieve the fact that it is constant.  
Under the general hypothesis of Theorem \ref{thm:main} $\mathcal{E}(t)$ is no  longer constant, 
but it will still be a useful quantity for which we can achieve some control.

\medskip
We recall that $\mathcal{E}_0\leq \eta_2$. From now on we consider $T>0$ 
and the unique solution to Syst. \eqref{syst:pert-u} on $[0,T]$, with $\mathcal{E}(t)< 10\tilde{\mathcal{E}_0}\leq 10\eta_2$, 
given by Lemma \ref{lemma:lwp}. We take $T$ maximal in the sense that  $\mathcal{E}(T)= 10\tilde{\mathcal{E}_0}$ (but $T$ is not necessarily the largest time of existence).
We thus have $3/4< |\Psi_{jk}(t,\sigma)|< 5/2$  on $[0,T]\times \R$ for all $j\neq k$.

\begin{proposition}\label{prop:I}
 We have for $t\in [0,T]$
\begin{equation*}
 \mathcal{E}(t)=\mathcal H+\frac{1}{2}\mathcal{T}
-\mathcal{A}+
\frac{\|(u_1+u_3)(t)\|^2+ \|(u_2+u_4)(t)\|^2}{2}.
\end{equation*}

\end{proposition}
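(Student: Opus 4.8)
Since by \eqref{def:energy-losange-bis} one has $\mathcal E(t)=\mathcal H+\mathcal I(t)$, the statement is equivalent to the purely algebraic identity
\[
\mathcal I(t)=\tfrac12\mathcal T-\mathcal A+\tfrac12\big(\|(u_1+u_3)(t)\|^2+\|(u_2+u_4)(t)\|^2\big),
\]
a pointwise-in-$t$ relation that requires only $\Psi_j=X_j+u_j$ with $u_j(t)\in H^1(\R)$ (so all integrals converge); no use of the equation \eqref{syst:interaction} is needed. The plan is to exploit that the rotating square has only two types of interaction distances: the four pairs of adjacent vertices, for which $|X_{jk}(t)|^2=2$, and the two pairs of opposite vertices $\{1,3\}$ and $\{2,4\}$, for which $|X_{jk}(t)|^2=4$; moreover, since the center of the square sits at the origin, opposite vertices are antipodal, $X_1=-X_3$ and $X_2=-X_4$, and $\sum_j X_j=0$, $|X_j|=1$. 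These are the only geometric facts about $(X_j)_j$ that enter.

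First I would split the pair sum in $\mathcal I(t)$ into its adjacent and diagonal parts. Writing $\tfrac{|\Psi_{jk}|^2}{|X_{jk}|^2}-1=\tfrac1{|X_{jk}|^2}\big(|\Psi_{jk}|^2-|X_{jk}|^2\big)$ and inserting the two possible values $|X_{jk}|^2\in\{2,4\}$ shows that $\mathcal I(t)$ differs from a fixed multiple of $\mathcal T=\sum_{j\ne k}\int(|\Psi_{jk}|^2-|X_{jk}|^2)\,d\sigma$ only through the contribution of the two opposite-vertex pairs, since those carry the weight $\tfrac14$ instead of $\tfrac12$. Hence $\mathcal I(t)$ can be rewritten as a multiple of $\mathcal T$ minus a multiple of the diagonal part $\mathcal T_{\mathrm{diag}}$, namely the contribution of $\{1,3\}$ and $\{2,4\}$ to $\mathcal T$ (counted with the $\sum_{j\ne k}$ convention).

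The key step is then to compute $\mathcal T_{\mathrm{diag}}$, and here the antipodal structure is decisive: $\Psi_1+\Psi_3=(X_1+X_3)+(u_1+u_3)=u_1+u_3$, and likewise $\Psi_2+\Psi_4=u_2+u_4$, so the parallelogram identity gives
\[
|\Psi_{13}|^2=|\Psi_1-\Psi_3|^2=2|\Psi_1|^2+2|\Psi_3|^2-|u_1+u_3|^2 ,
\]
and similarly for $\{2,4\}$. Subtracting $|X_{13}|^2=|X_{24}|^2=4=2|X_1|^2+2|X_3|^2$ and integrating, every term assembles into $\mathcal A=\sum_j\int(|\Psi_j|^2-|X_j|^2)\,d\sigma$ together with exactly the two squared $L^2$-norms in the statement; no cross terms $\Psi_j\bar\Psi_k$ survive. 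Substituting this expression for $\mathcal T_{\mathrm{diag}}$ back into the formula for $\mathcal I(t)$ and collecting terms yields the claimed identity. The one place demanding care is the bookkeeping of weights in the splitting step: because adjacent and opposite pairs are weighted differently, $\mathcal I(t)$ is not proportional to $\mathcal T$, and one must keep careful track of which pairs are being summed and with which factor $1/|X_{jk}|^2$; it is precisely the antipodal identities $X_1=-X_3$, $X_2=-X_4$ that force the diagonal part to close up in terms of $\mathcal A$ and $\|u_1+u_3\|^2$, $\|u_2+u_4\|^2$ alone, making the final collection go through.
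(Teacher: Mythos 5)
Your proposal is correct and follows essentially the same route as the paper: split the pair sum in $\mathcal I(t)$ according to the two values $|X_{jk}|^2\in\{2,4\}$, isolate the diagonal pairs $\{1,3\}$ and $\{2,4\}$, and use the parallelogram identity together with $X_1+X_3=X_2+X_4=0$ to rewrite their contribution via $\mathcal A$ and $\|u_1+u_3\|_{L^2}^2$, $\|u_2+u_4\|_{L^2}^2$. The only point you defer --- the exact bookkeeping of the weights $\tfrac12$ versus $\tfrac14$ over the ordered sum $\sum_{j\neq k}$ --- is precisely the computation the paper carries out explicitly, so nothing substantive is missing.
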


\begin{proof}Since $(X_1,X_2,X_3,X_4)$ is a square of radius $1$ we have 
\begin{equation*}
 |X_{jk}(t)|^2=2\quad \text{if } |j-k|=1,\quad |X_{jk}(t)|^2=4\quad \text{if }|j-k|=2.
\end{equation*}
It follows that
\begin{equation*}
 \begin{split}
&  \sum_{j\neq k} \left(\frac{|\P_{jk}|^2}{|X_{jk}|^2}-1\right)
=\sum_{j\neq k}\frac{|\P_{jk}|^2-|X_{jk}|^2}{|X_{jk}|^2}\\
&=\frac{1}{2}\sum_{j\neq k}\left(|\P_{jk}|^2-|X_{jk}|^2\right)+2\left(\frac{1}{4}-\frac{1}{2}\right)\left( |\P_{13}|^2-|X_{13}|^2+|\P_{24}|^2-|X_{24}|^2\right).
 \end{split}
\end{equation*}
On the other hand, we compute
\begin{equation*}
 \begin{split}
  & |\P_{13}|^2+|\P_{24}|^2-|X_{13}|^2-|X_{24}|^2\\
&=2\sum_{j=1}^4 |\P_j|^2-|\P_1+\P_3|^2-|\P_2+\P_4|^2-8\\
&=2\sum_{j=1}^4 \left( |\P_j|^2-|X_j|^2\right)-\left( |\P_1+\P_3|^2+|\P_2+\P_4|^2\right), \end{split}
\end{equation*}
so integrating with respect to $\sigma$ and using that $\Psi_1+\Psi_3=u_1+u_3$ and $\Psi_2+\Psi_4=u_2+u_4$ we are led to the
conclusion.
\end{proof}

\begin{corollary}
In the case of the parallelogram $\|(u_1+u_3)(0)\|_{L^2}^2=\|(u_2+u_4)(0)\|_{L^2}^2=0$, 
so it follows that $\|(u_1+u_3)(t)\|_{L^2}^2=\|(u_2+u_4)(t)\|_{L^2}^2=0$ for all times, using 
the fact that if $(\Psi_1,\Psi_2,\Psi_3,\Psi_4)$ is a solution of \eqref{syst:interaction} 
then $(-\Psi_3,-\Psi_4,-\Psi_1,-\Psi_2)$ is also a solution. Then $\mathcal I$ is conserved in time and 
global existence follows. 
\end{corollary}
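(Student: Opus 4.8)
\emph{Plan of proof.} The idea is to first show that the parallelogram condition
$\|(u_1+u_3)(t)\|_{L^2}=\|(u_2+u_4)(t)\|_{L^2}=0$ is \emph{propagated} by the flow, by combining a discrete symmetry of Syst. \eqref{syst:interaction} with the uniqueness part of Lemma \ref{lemma:lwp}; once this is known, Proposition \ref{prop:I} reduces $\mathcal E(t)$ to the fixed linear combination $\mathcal H+\tfrac12\mathcal T-\mathcal A$ of (formally) conserved quantities, so that $\mathcal E(t)\equiv\mathcal E_0\leq\eta_2$ on the whole interval of existence; finally Remark \ref{rem:extension} upgrades the local solution to a global one, with no collision.

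\emph{Step 1: the symmetry.} Since $\Gamma_j=1$ for every $j$ and the Biot--Savart nonlinearity $z\mapsto z/|z|^2$ is odd, one checks directly that if $(\Psi_1,\Psi_2,\Psi_3,\Psi_4)$ solves \eqref{syst:interaction} then so does $(\tilde\Psi_1,\tilde\Psi_2,\tilde\Psi_3,\tilde\Psi_4):=(-\Psi_3,-\Psi_4,-\Psi_1,-\Psi_2)$. Writing $\tilde\Psi_j=-\Psi_{j+2}$ with indices read modulo $4$, the substitution changes the sign of $i\dt\Psi_{j+2}+\partial_\sigma^2\Psi_{j+2}$, and since $k\mapsto k+2$ is a bijection of $\{1,\dots,4\}\setminus\{j\}$ onto $\{1,\dots,4\}\setminus\{j+2\}$ the interaction sum is reproduced up to the same global sign, so the equation for $\tilde\Psi_j$ holds. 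Moreover, for the rotating square centered at the origin one has $X_{j+2}(t)=-X_j(t)$ for all $t$, so the symmetrized initial data are $\tilde\Psi_{j,0}=-\Psi_{j+2,0}=X_{j,0}-u_{j+2,0}$, i.e. $\tilde u_{j,0}=-u_{j+2,0}$. Under the hypothesis $u_{1,0}+u_{3,0}=u_{2,0}+u_{4,0}=0$ this gives $\tilde u_{j,0}=u_{j,0}$ for every $j$, hence $\tilde\Psi_{j,0}=\Psi_{j,0}$.

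\emph{Step 2: propagation and conservation.} By the uniqueness in Lemma \ref{lemma:lwp} (applied in the ball $B_T$ and iterated), the two solutions coincide as long as either exists: $\Psi_j(t)=-\Psi_{j+2}(t)$. Subtracting $X_j(t)=-X_{j+2}(t)$ yields $u_j(t)=-u_{j+2}(t)$, so $(u_1+u_3)(t)\equiv 0$ and $(u_2+u_4)(t)\equiv 0$, and the $L^2$ norms stay zero. Substituting this into Proposition \ref{prop:I} gives $\mathcal E(t)=\mathcal H+\tfrac12\mathcal T-\mathcal A$; the right-hand side is a fixed combination of the renormalized quantities $\mathcal H,\mathcal T,\mathcal A$, preserved by the flow in view of the conservation laws for \eqref{syst:point-vortex} recalled in the introduction, hence $\mathcal E(t)=\mathcal E(0)=\mathcal E_0\leq\eta_2$ throughout. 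Since $\mathcal E(t)$ never reaches the threshold $10\tilde{\mathcal E_0}=10\mathcal E_0$, the maximality argument of the proof of Theorem \ref{thm:main} shows the solution never stops: by Remark \ref{rem:extension} the norms $\sum_j\|u_j(t)\|_{H^1}$ cannot blow up while $\mathcal E(t)$ stays small, so Lemma \ref{lemma:lwp} can be iterated on intervals of controlled length, producing a global solution that moreover satisfies $3/4\leq |\Psi_{jk}(t,\sigma)|/|X_{jk}(t)|\leq 5/4$ for all $t,\sigma$.

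\emph{Main obstacle.} The one genuinely delicate point is the rigorous conservation of $\mathcal H$, $\mathcal T$ and $\mathcal A$, which are only \emph{formally} preserved: one must justify differentiating them along an $H^1$ solution, either by first treating smooth data and passing to the limit, or by checking that the local solution of Lemma \ref{lemma:lwp} is regular enough and, after subtracting the straight filaments, decays fast enough at $\sigma=\pm\infty$ for the integrations by parts to be legitimate. Everything else follows mechanically from the discrete symmetry, the uniqueness in Lemma \ref{lemma:lwp}, and Proposition \ref{prop:I}.
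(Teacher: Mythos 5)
Your proposal is correct and follows essentially the same route as the paper: the discrete symmetry $(\Psi_1,\Psi_2,\Psi_3,\Psi_4)\mapsto(-\Psi_3,-\Psi_4,-\Psi_1,-\Psi_2)$ together with uniqueness propagates $u_j+u_{j+2}\equiv 0$, after which Proposition \ref{prop:I} reduces $\mathcal E(t)$ to the conserved combination $\mathcal H+\tfrac12\mathcal T-\mathcal A$ and Remark \ref{rem:extension} yields global existence. Your write-up merely makes explicit the verifications (oddness of the nonlinearity, $X_{j+2}=-X_j$, matching of initial data) that the paper leaves implicit.
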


\begin{remark}
One can do similar computations in others particular cases, for instance for 
ends and the middle of the segment,
$$\mathcal E(t)=-\mathcal{H}+\mathcal{I}-\frac 32 \mathcal{A} + \frac 34 \left(\|u_1(t)\|_{L^2}^2
+\|(u_2+u_3)(t)\|_{L^2}^2\right),$$
or for hexagone,
$$\mathcal E(t)=-\mathcal{H}+\mathcal{I}-\frac 72 \mathcal{A} + 
\frac 23 \sum_{j=1}^2\|(u_j+u_{j+2}+u_{j+4})(t)\|_{L^2}^2+\frac 34 \sum_{j=1}^3 \|(u_j+u_{j+3})(t)\|_{L^2}^2.$$
But these quantities have no reason to be conserved, unless the perturbations have the same shape as the shape of $(X_j)$, which enters the framework of the first part of this article. Moreover, when trying to control the growth of $\|u_1(t)\|_{L^2}$ for instance in the first example, the time of control is not satisfactory due to the presence of linear terms in the equation of $u_1$, that cannot be resorbed.
\end{remark}

In order to control the evolution of the energy we have to
control the quantity $\|(u_1+u_3)(t)\|_{L^2}^2+\|(u_2+u_4)(t)\|_{L^2}^2$. We are led to introduce the new unknowns
$$v=u_1+u_3,\quad w=u_2+u_4.$$

\begin{proposition}\label{prop:est-U}
 We have for $t\in [0,T]$, with $v=u_1+u_3$ and $w=u_2+u_4$,
\begin{equation*}\begin{split}
\|v(t)\|_{L^2}+\|w(t)\|_{L^2}
&\leq \|v(0)\|_{L^2}+\|w(0)\|_{L^2}\\+ Ct\sup_{s\in [0,T]}&
 \max_{j\neq k}\|u_{jk}(s)\|_{L^2}^{1/2}\mathcal E(s)^{1/4}
\left(\|v(s)\|_{L^2}+\|w(s)\|_{L^2}+
\mathcal E(s)^{1/2}\right).\end{split}
\end{equation*}

\end{proposition}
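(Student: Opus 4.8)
The plan is to derive a closed evolution equation for the two symmetric sums $v=u_1+u_3$ and $w=u_2+u_4$, to exhibit an algebraic cancellation forced by the square geometry that makes the surviving nonlinearity quadratically small, and then to close with a plain $L^2$ estimate for the Schr\"odinger flow. Set $G(z)=z/|z|^2=1/\bar z$ and, for $X\neq 0$, $F(X,a)=G(X+a)-G(X)$, so that \eqref{syst:pert-u} (with $\Gamma_j=1$) reads $i\dt u_j+\partial_\sigma^2 u_j+\sum_{k\neq j}F(X_{jk},u_{jk})=0$. Since $G(-z)=-G(z)$ we have $F(-X,-a)=-F(X,a)$ and $F(-X,a)=-F(X,-a)$. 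For the rotating square centered at the origin, $X_3=-X_1$ and $X_4=-X_2$, hence $X_{31}=-X_{13}$, $u_{31}=-u_{13}$, so the self-interaction of the pair $\{1,3\}$ cancels, $F(X_{13},u_{13})+F(X_{31},u_{31})=0$. Using in addition $X_{34}=-X_{12}$ and $X_{32}=-X_{14}$, summing the equations of \eqref{syst:pert-u} for $j=1$ and $j=3$ gives
$$i\dt v+\partial_\sigma^2 v+R_v=0,\qquad R_v=\bigl(F(X_{12},u_{12})-F(X_{12},-u_{34})\bigr)+\bigl(F(X_{14},u_{14})-F(X_{14},-u_{32})\bigr),$$
and the analogous computation for $j=2,4$ yields $i\dt w+\partial_\sigma^2 w+R_w=0$ with $R_w=-R_v$.

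Next I would use the exact identity $F(X,a)-F(X,b)=\overline{b-a}\,\big/\,\overline{(X+a)(X+b)}$, which follows from $G(z)=1/\bar z$. Since $u_{12}+u_{34}=v-w=u_{14}+u_{32}$, both differences in $R_v$ have numerator $-(v-w)$, so that
$$R_v=-\overline{(v-w)}\left(\frac{1}{\overline{(X_{12}+u_{12})(X_{12}-u_{34})}}+\frac{1}{\overline{(X_{14}+u_{14})(X_{14}-u_{32})}}\right).$$
On $[0,T]$ each of the four factors equals, up to a sign, a difference of filaments, namely $X_{12}+u_{12}=\P_{12}$, $X_{12}-u_{34}=-\P_{34}$, $X_{14}+u_{14}=\P_{14}$, $X_{14}-u_{32}=\P_{23}$ (using $X_{12}=X_{43}$ and $X_{14}=X_{23}$), hence has modulus $>3/4$, while $|u_{jk}|\leq C$ there since $|\P_{jk}|<5/2$ and $|X_{jk}|\leq 2$. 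The crucial point is that $X_{12}=X_1-X_2$ and $X_{14}=X_1+X_2$ are orthogonal and of equal length, so $X_{14}=\pm iX_{12}$ and $X_{12}^{-2}+X_{14}^{-2}=0$; as moduli are unchanged by conjugation, this is precisely the value of the parenthesis at $u_{jk}=0$. Subtracting this vanishing quantity and using $\tfrac1{(X+a)(X+b)}-\tfrac1{X^2}=\tfrac{-X(a+b)-ab}{X^2(X+a)(X+b)}$ with the bounds above, the parenthesis is $O\bigl(\sum_{j\neq k}|u_{jk}|\bigr)$ pointwise, whence $|R_v(t,\sigma)|\leq C\,|v(t,\sigma)-w(t,\sigma)|\sum_{j\neq k}|u_{jk}(t,\sigma)|$ on $[0,T]\times\R$.

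Finally, by Duhamel's formula and the $L^2$-isometry of $e^{it\partial_\sigma^2}$, together with $R_w=-R_v$,
$$\|v(t)\|_{L^2}+\|w(t)\|_{L^2}\leq\|v(0)\|_{L^2}+\|w(0)\|_{L^2}+2\int_0^t\|R_v(\tau)\|_{L^2}\,d\tau.$$
From the pointwise bound of the previous step, H\"older's inequality and $\|v-w\|_{L^2}\leq\|v\|_{L^2}+\|w\|_{L^2}$, one gets $\|R_v(t)\|_{L^2}\leq C(\|v(t)\|_{L^2}+\|w(t)\|_{L^2})\max_{j\neq k}\|u_{jk}(t)\|_{L^\infty}$; by the one-dimensional Gagliardo--Nirenberg inequality $\|f\|_{L^\infty}^2\leq 2\|f\|_{L^2}\|\partial_\sigma f\|_{L^2}$, together with $\partial_\sigma u_{jk}=\partial_\sigma\P_{jk}$ (since $\partial_\sigma X_j\equiv 0$) and \eqref{ineq:plus-loin}, one has $\|\partial_\sigma u_{jk}(t)\|_{L^2}\leq C\mathcal E(t)^{1/2}$, hence $\|u_{jk}(t)\|_{L^\infty}\leq C\|u_{jk}(t)\|_{L^2}^{1/2}\mathcal E(t)^{1/4}$. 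Inserting this and bounding the integrand by its supremum over $[0,T]$ yields the stated inequality (indeed a slightly stronger one, in which the term $\mathcal E(s)^{1/2}$ is absent). The main obstacle is the cancellation of the second paragraph: one must recognize that, after the two symmetry cancellations, $R_v$ factors as $\overline{(v-w)}$ times a parenthesis vanishing at $u_{jk}=0$ precisely because the four vortices form a square, i.e. because $X_{14}=\pm iX_{12}$; without this extra smallness one would only control $\|R_v(t)\|_{L^2}$ by $C(\|v(t)\|_{L^2}+\|w(t)\|_{L^2})$, an $O(1)$ growth rate too crude to feed into Proposition \ref{prop:I} and to obtain the large time of existence of Theorem \ref{thm:main}.
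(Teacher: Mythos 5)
Your proof is correct, and it reaches a slightly stronger conclusion than the one stated (the term $\mathcal E(s)^{1/2}$ inside the final parenthesis is indeed superfluous in your version). The overall strategy coincides with the paper's: sum the equations over the diagonal pairs $\{1,3\}$ and $\{2,4\}$, use the square geometry to kill the dangerous linear contribution, and close with Duhamel, Gagliardo--Nirenberg and the coercivity inequality \eqref{ineq:plus-loin}. The organization of the nonlinearity, however, is genuinely different. The paper Taylor-expands the interaction into a linear part $\mathcal L_v(u)$, shown to vanish by an explicit computation using $|X_{1k}|^2=|X_{3k}|^2=2$ and $iX_{12}=X_{23}$, plus three quadratic remainders $\mathcal R_v^1,\mathcal R_v^2,\mathcal R_v^3$ estimated term by term; since $\mathcal R_v^2+\mathcal R_v^3$ is not visibly proportional to $v-w$, the paper must bound it by $\max_{j\neq k}|u_{jk}|\,\bigl||\P_{jk}|^2/|X_{jk}|^2-1\bigr|$, which is the source of the extra $\mathcal E(s)^{1/2}$ in the statement. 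Your use of $G(z)=1/\bar z$ gives instead the exact factorization $R_v=-\overline{(v-w)}\,\bigl(\cdots\bigr)$, with the bracket vanishing at $u=0$ precisely because $X_{14}=iX_{12}$ forces $X_{12}^{-2}+X_{14}^{-2}=0$ --- the same algebraic fact the paper uses to prove $\mathcal L_v(u)=0$, but packaged so that \emph{every} surviving term carries the factor $v-w$. This buys you the cleaner pointwise bound $|R_v|\leq C|v-w|\max_{j\neq k}|u_{jk}|$ and hence the sharper estimate; the paper's splitting is perhaps more systematic (it generalizes mechanically to the other configurations mentioned in the remark after the corollary) but loses that structural information.
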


\begin{proof}In view of Syst. \eqref{syst:interaction} and Syst. \eqref{syst:point-vortex}, we have
\begin{equation*}
 \begin{split}
  i\partial_t v+\partial_\sigma^2 v&
=-\sum_{k\neq 1,3}\left\{
\left( \frac{\P_{1k}}{|\P_{1k}|^2}-\frac{X_{1k}}{|X_{1k}|^2}\right)
+\left(\frac{\P_{3k}}{|\P_{3k}|^2}-\frac{X_{3k}}{|X_{3k}|^2}\right)\right\}\\
&=-\sum_{k\neq 1,3}
\left\{ X_{1k}\left( \frac{1}{|\P_{1k}|^2}-\frac{1}{|X_{1k}|^2}\right)
+X_{3k}\left( \frac{1}{|\P_{3k}|^2}-\frac{1}{|X_{3k}|^2}\right)\right\}\\
&-\sum_{k\neq 1,3}
\left\{ u_{1k}\left( \frac{1}{|\P_{1k}|^2}-\frac{1}{|X_{1k}|^2}\right)
+u_{3k}\left( \frac{1}{|\P_{3k}|^2}-\frac{1}{|X_{3k}|^2}\right)\right\}\\
&-\sum_{k\neq 1,3}\left\{ \frac{u_{1k}}{|X_{1k}|^2}+\frac{u_{3k}}{|X_{3k}|^2}\right\}.
 \end{split}
\end{equation*}
We infer that
\begin{equation*}
 \begin{split}
  i\partial_t v+\partial_\sigma^2 v&=\mathcal{L}_v(u)+\mathcal{R}_v(u),\end{split}\end{equation*}
where $\mathcal{L}_v$ denotes the linear part, 
\begin{equation*}
\begin{split}
\mathcal{L}_v&(u)=2\sum_{k\neq 1,3}
\left\{X_{1k} \frac{\re (\overline{u_{1k}}X_{1k})}{|X_{1k}|^4}
+X_{3k} \frac{\re (\overline{u_{3k}}X_{3k})}{|X_{3k}|^4}\right\}
-\sum_{k\neq 1,3}\left\{ \frac{u_{1k}}{|X_{1k}|^2}+\frac{u_{3k}}{|X_{3k}|^2}\right\}
\end{split}
\end{equation*}
and where the remainder $\mathcal{R}_v$ is quadratic in $u$,
\begin{equation*}
\begin{split}
&\mathcal{R}_v(u)\\
&=\sum_{k\neq 1,3}\left\{ \frac{X_{1k}}{|X_{1k}|^4}|u_{1k}|^2+\frac{X_{3k}}{|X_{3k}|^4}|u_{3k}|^2\right\}\\
&-\sum_{k\neq 1,3} \left\{ X_{1k}\left( \frac{|X_{1k}|^2-|\Psi_{1k}|^2}{|X_{1k}|^2}\right)
\left(\frac{1}{|\P_{1k}|^2}-\frac{1}{|X_{1k}|^2}\right)
+X_{3k}\left( \frac{|X_{3k}|^2-|\Psi_{3k}|^2}{|X_{3k}|^2}\right)
\left(\frac{1}{|\P_{3k}|^2}-\frac{1}{|X_{3k}|^2}\right)\right\}\\
&-\sum_{k\neq 1,3}
\left\{ u_{1k}\left( \frac{1}{|\P_{1k}|^2}-\frac{1}{|X_{1k}|^2}\right)
+u_{3k}\left( \frac{1}{|\P_{3k}|^2}-\frac{1}{|X_{3k}|^2}\right)\right\}\\
&=\mathcal{R}_v^1(u)+\mathcal{R}_v^2(u)+\mathcal{R}_v^3(u).
 \end{split}
\end{equation*}

\medskip

We claim that $\mathcal{L}_v(u)=0$. Indeed, using that $|X_{1k}|^2=|X_{3k}|^2=2$ for $k\neq 1,3$,
\begin{equation*}\begin{split}
 \mathcal{L}_v(u)&=
\frac{1}{2}\sum_{k\neq 1,3}
\left(X_{1k} \re (\overline{u_{1k}}X_{1k})
+X_{3k} \re (\overline{u_{3k}}X_{3k})\right)
-\frac{1}{2}\sum_{k\neq 1,3}\left(v-2u_k\right)\\
&=\frac{1}{2}\sum_{k\neq 1,3}
\left(X_{1k} \re (\overline{u_{1k}}X_{1k})
+X_{3k} \re (\overline{u_{3k}}X_{3k})\right)-v+w.
\end{split}
\end{equation*}

Now we compute, using that $X_{12}=-X_{34}$ and $X_{23}=X_{14}$,
\begin{equation*}
 \begin{split}
 &\sum_{k\neq 1,3}
\left(X_{1k} \re (\overline{u_{1k}}X_{1k})
+X_{3k} \re (\overline{u_{3k}}X_{3k})\right)\\
 &=X_{12}\re (\overline{u_{12}}X_{12})
+X_{32}\re (\overline{u_{32}}X_{32})+X_{14}\re (\overline{u_{14}}X_{14})
+X_{34}\re (\overline{u_{34}}X_{34})\\
&=X_{12}\re (\overline{u_{12}}X_{12})+X_{12}\re (\overline{u_{34}}X_{12})
+X_{32}\re (\overline{u_{32}}X_{32})++X_{32}\re (\overline{u_{14}}X_{32})\\
&=X_{12}\re ((\overline{u_{12}}+\overline{u_{34}})X_{12})+X_{32}\re ((\overline{u_{32}}+\overline{u_{14}})X_{32}).
 \end{split}
\end{equation*}
We observe that
\begin{equation*}
 u_{12}+u_{34}=u_{32}+u_{14}=u_1+u_3-(u_2+u_4)= v-w.
\end{equation*}
Therefore, inserting that $iX_{12}=X_{23}$ and that $|X_{12}|^2=2$ in the previous formula we find
\begin{equation*}
 \begin{split}
& \sum_{k\neq 1,3}
\big(X_{1k} \re (\overline{u_{1k}}X_{1k})
+X_{3k} \re (\overline{u_{3k}}X_{3k})\big)\\
&=X_{12}\re ((\overline{v}-\overline{w})X_{12})-iX_{12}\im ((\overline{v}-\overline{w})X_{12})\\&=2(v-w),
 \end{split}
\end{equation*}
and finally
$
 \mathcal{L}_v(u)=0.
$

\medskip

We next estimate the remainder terms. Since $3/4<|\Psi_{jk}|<5/2$ we have
 $\left||X_{jk}|^2-|\Psi_{jk}|^2\right|\leq C|u_{jk}|$ on $[0,T]$
and therefore
\begin{equation}\label{ineq:reste1}
 \left|\mathcal{R}^2_v(u)+\mathcal{R}^3_v(u)\right|\leq C\max_{j\neq k}
|u_{jk}|\left| \frac{|\Psi_{jk}|^2}{|X_{jk}|^2}-1\right|.
\end{equation}

Expanding the first term $\mathcal{R}_v^1(u)$ and using the symmetries of $(X_1,X_2,X_3,X_4)$, we then have
\begin{equation*}
 \begin{split}
\mathcal{R}^1_v(u)&= 
 \frac{1}{4}
\sum_{k\neq 1,3}\left\{ X_{1k}|u_{1k}|^2+X_{3k}|u_{3k}|^2\right\}\\
&= \frac{1}{4}\left\{ X_{12}\left(|u_{12}|^2-|u_{34}|^2\right)+X_{14}\left(|u_{14}|^2-|u_{32}|^2\right)\right\}\\
&=\frac{1}{2}\left\{ X_{12}\re\left(\overline{u_{12}-u_{34}}\, (v-w)\right)+
X_{14}\re\left(\overline{u_{14}-u_{32}}\, (v-w)\right)\right\},
 \end{split}
\end{equation*}
so that
\begin{equation}
 \label{ineq:reste2}
\left|\mathcal{R}^1_v(u)\right|\leq C\max_{j,k}|u_{jk}||v-w|.
\end{equation}

We perform similar computations for $w$ and from \eqref{ineq:reste1}-\eqref{ineq:reste2} we infer  the estimate
\begin{equation*}\begin{split}
 \|v(t)\|_{L^2}+\|w(t)\|_{L^2}&\leq  \|v(0)\|_{L^2}+\|w(0)\|_{L^2}
+\int_0^t \left(\|\mathcal{R}_v(u)(s)\|_{L^2}+\|\mathcal{R}_w(u)(s)\|_{L^2}\right)\,ds\\
&\leq  \|v(0)\|_{L^2}+\|w(0)\|_{L^2}\\&
+t\sup_{s\in[0,t]}\max_{j\neq k}\|u_{jk}(s)\|_{L^\infty}\left(
\left\|\frac{|\P_{jk}(s)|^2}{|X_{jk}(s)|^2}-1\right\|_{L^2}+\|v(s)\|_{L^2}+\|w(s)\|_{L^2}\right).
\end{split}
\end{equation*}
Finally we apply Gagliardo-Nirenberg inequality and \eqref{ineq:plus-loin} to obtain the conclusion.
\end{proof}

\begin{proposition}\label{prop:est-uj}
 We have for $t\in [0,T]$
\begin{equation*}
\sum_{j\neq k} \|u_{jk}(t)\|_{L^2}\leq C \sum_{j\neq k} \|u_{jk}(0)\|_{L^2}
+C\,t\, \sup_{s\in[0,t]}\mathcal E(s)^{1/2}.
\end{equation*}

\end{proposition}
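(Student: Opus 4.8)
The plan is to track the evolution of $\sum_{j\neq k}\|u_{jk}(t)\|_{L^2}^2$ directly from System \eqref{syst:pert-u} with $\Gamma_j=1$. Write $A_{jl}=\dfrac{\P_{jl}}{|\P_{jl}|^2}-\dfrac{X_{jl}}{|X_{jl}|^2}$, so that $A_{lj}=-A_{jl}$ and the equation for $u_j$ reads $i\partial_t u_j+\partial_\sigma^2 u_j+\sum_{k\neq j}A_{jk}=0$. First I would observe that summing this over $j$ the interaction terms cancel in unordered pairs by antisymmetry of $A$, so that $S:=\sum_j u_j$ solves the free Schr\"odinger equation and $\|S(t)\|_{L^2}$ is constant; since $\sum_{j\neq k}\|u_{jk}\|_{L^2}^2=2N\sum_j\|u_j\|_{L^2}^2-2\|S\|_{L^2}^2$, it suffices to control $\sum_j\|u_j(t)\|_{L^2}^2$. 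Pairing the equation for $u_j$ with $u_j$ in $L^2$, the Laplacian contributes nothing to the real part; summing over $j$ and using antisymmetry of $A$ once more one gets $\frac{d}{dt}\sum_j\|u_j(t)\|_{L^2}^2=-\im\sum_{j\neq k}\int\overline{u_{jk}}\,A_{jk}\,d\sigma$ (all manipulations being justified at the level of the $H^1$ solution, e.g. through the Duhamel formula).

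The crux is then an elementary algebraic identity. Setting $p=\overline{\P_{jk}}$, $q=\overline{X_{jk}}$ one has $\overline{u_{jk}}=p-q$ and $A_{jk}=\tfrac1p-\tfrac1q$, so $\overline{u_{jk}}A_{jk}=-\dfrac{(p-q)^2}{pq}=2-\dfrac pq-\dfrac qp$; taking imaginary parts and using $p\bar q=\overline{\P_{jk}}X_{jk}=|X_{jk}|^2+\overline{u_{jk}}X_{jk}$ yields
$$\im\big(\overline{u_{jk}}\,A_{jk}\big)=\frac{\im(\overline{X_{jk}}\,u_{jk})}{|\P_{jk}|^2}\Big(\frac{|\P_{jk}|^2}{|X_{jk}|^2}-1\Big).$$
The point is that the a priori dangerous contribution — linear in $u_{jk}$, hence only of size $\|u_{jk}\|_{L^2}$ — has vanishing imaginary part, and what survives is proportional to $|\P_{jk}|^2/|X_{jk}|^2-1$, precisely the quantity controlled by the energy via \eqref{ineq:plus-loin}. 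Thus $\frac{d}{dt}\sum_j\|u_j\|_{L^2}^2=-\sum_{j\neq k}\int\frac{\im(\overline{X_{jk}}u_{jk})}{|\P_{jk}|^2}\big(\frac{|\P_{jk}|^2}{|X_{jk}|^2}-1\big)\,d\sigma$.

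To conclude, on $[0,T]$ the chosen solution satisfies (by the choice of $T$) that \eqref{ineq:plus-loin} is available and that $|\P_{jk}|,|X_{jk}|$ are bounded above and below by positive absolute constants, so $|\P_{jk}|^{-2}\le C$ and $|\im(\overline{X_{jk}}u_{jk})|\le C|u_{jk}|$. By Cauchy--Schwarz and \eqref{ineq:plus-loin},
$$\Big|\frac{d}{dt}\sum_j\|u_j\|_{L^2}^2\Big|\le C\sum_{j\neq k}\|u_{jk}\|_{L^2}\Big\|\tfrac{|\P_{jk}|^2}{|X_{jk}|^2}-1\Big\|_{L^2}\le C\,\mathcal E(t)^{1/2}\sum_{j\neq k}\|u_{jk}\|_{L^2}.$$
Writing $Y(t)=\sum_{j\neq k}\|u_{jk}(t)\|_{L^2}^2$ and using that there are finitely many pairs, so $\sum_{j\neq k}\|u_{jk}\|_{L^2}\le C\,Y^{1/2}$ and $|\tfrac{d}{dt}Y|=2N\big|\tfrac{d}{dt}\sum_j\|u_j\|_{L^2}^2\big|\le C\,\mathcal E(t)^{1/2}Y^{1/2}$, we get $|\tfrac{d}{dt}Y^{1/2}|\le C\,\mathcal E(t)^{1/2}$ (regularizing $Y$ by $Y+\eps$ and letting $\eps\to0$ if $Y$ vanishes). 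Integrating on $[0,t]$ and bounding $Y(0)^{1/2}\le\sum_{j\neq k}\|u_{jk}(0)\|_{L^2}$ and $\sum_{j\neq k}\|u_{jk}(t)\|_{L^2}\le C\,Y(t)^{1/2}$ gives the asserted inequality. The only nonroutine point is the cancellation in the second paragraph: a crude $L^2$ bound of $G_{jk}=\sum_{l\neq j}A_{jl}-\sum_{l\neq k}A_{kl}$ would leave a term of size $\|u_{jk}\|_{L^2}$ rather than $\mathcal E^{1/2}$, hence only exponential-in-$t$ control, useless for the long-time statement of Theorem \ref{thm:main}; it is the vanishing of the imaginary part of the linear piece, together with the conservation of $\|\sum_j u_j\|_{L^2}$ (which discards the non-decaying, non-$H^1$ directions of $u$), that produces the clean linear-in-$t$ growth.
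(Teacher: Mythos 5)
Your proof is correct, and it follows the same basic strategy as the paper: a differential inequality for the $L^2$ norms in which the a priori dangerous linear-in-$u$ interaction terms drop out under the imaginary part, leaving only contributions controlled by $\left\||\P_{jk}|^2/|X_{jk}|^2-1\right\|_{L^2}\leq C\mathcal E^{1/2}$ via \eqref{ineq:plus-loin}, followed by a Gr\"onwall-type integration of $\frac{d}{dt}Y^{1/2}$. The implementation differs in a mildly interesting way. The paper works directly with the equation for $u_{jk}$, pairs it with $\overline{u_{jk}}$, and kills the cross terms $\sum_{l}\im(u_{jk}\overline{u_{jl}})/|\P_{jl}|^2$ by an index-swap symmetrization over $(j,k,l)$; the off-diagonal pairings are exactly where the cancellation is nontrivial there. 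You instead estimate $\sum_j\|u_j\|_{L^2}^2$, which after symmetrizing in $(j,k)$ leaves only the diagonal pairing $\im(\overline{u_{jk}}A_{jk})$, for which your exact pointwise identity
$\im(\overline{u_{jk}}A_{jk})=\frac{\im(\overline{X_{jk}}u_{jk})}{|\P_{jk}|^2}\bigl(\frac{|\P_{jk}|^2}{|X_{jk}|^2}-1\bigr)$
does all the work; the price is that you must separately invoke the conservation of $\|\sum_j u_j\|_{L^2}$ (valid since $\sum_j u_j$ solves the free Schr\"odinger equation, by antisymmetry of $A_{jk}$) to convert the bound back into one for $\sum_{j\neq k}\|u_{jk}\|_{L^2}^2$. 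Both routes give the same linear-in-$t$ growth; yours isolates the cancellation in a clean algebraic identity, while the paper's avoids the auxiliary conservation law. Note also that your argument, like the paper's, only uses $\Gamma_j=1$ and the two-sided bounds on $|\P_{jk}|$ and $|X_{jk}|$ on $[0,T]$, so no hidden use of the square structure is made here.
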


\begin{proof}
By \eqref{syst:pert-u},
\begin{equation*}
 \begin{split}
 & i\partial_t u_{jk}+\partial_\sigma^2 u_{jk}\\
&=-\sum_{l\neq j}\frac{u_{jl}}{|\Psi_{jl}|^2}+\sum_{l\neq k}\frac{u_{kl}}{|\Psi_{kl}|^2}
-\sum_{l\neq j} X_{jl}\left( \frac{1}{|\Psi_{jl}|^2}
-\frac{1}{|X_{jl}|^2}\right)+\sum_{l\neq k} X_{kl}\left( \frac{1}{|\Psi_{kl}|^2}
-\frac{1}{|X_{kl}|^2}\right).
 \end{split}
\end{equation*}
We multiply the equation by $\overline{u_{jk}}$, take the imaginary part and perform the sum over $j$ and $k$, cancelling
 the first two terms in the right-hand side. Indeed,
\begin{equation*}
 \begin{split}
\sum_{j, k} \sum_{l\neq j} \frac{\im\left( u_{jk}\overline{u_{jl}}\right)}{|\Psi_{jl}|^2}
&=\sum_{j, k} \sum_{l\neq j} 
\frac{\im\left( (u_{jl}+u_{lk})\overline{u_{jl}}\right)}{|\Psi_{jl}|^2}\\
&=\sum_{j, k} \sum_{l\neq j} \frac{\im\left( u_{lk}\overline{u_{jl}}\right)}{|\Psi_{jl}|^2}\\
&=-\sum_{j, k} \sum_{l\neq j} \frac{\im\left( u_{jk}\overline{u_{jl}}\right)}{|\Psi_{jl}|^2},
\end{split}
\end{equation*}
by exchanging $j$ and $l$ in the last equality. Therefore the latter sum vanishes. By the same arguments we
also have
\begin{equation*}
 \sum_{j, k} \sum_{l\neq k} \frac{\im\left( u_{jk}\overline{u_{kl}}\right)}{|\Psi_{kl}|^2}=0.
\end{equation*}
It follows that 
\begin{equation*}
 \begin{split}
  \frac{d}{dt} \sum_{j\neq k}  \|u_{jk}\|_{L^2}^2&\leq C
\sum_{j\neq k} \sum_{l\neq j}\int |u_{jk}||X_{jl}|\frac{1}{|\Psi_{jl}|^2}
\left| \frac{|\Psi_{jl}|^2}{|X_{jl}|^2}-1\right|\,d\sigma\\
 &\leq C\big(\sum_{j\neq k}  \|u_{jk}\|_{L^2}^2\big)^{1/2}
\max_{j\neq k} \left\| \frac{|\Psi_{jk}|^2}{|X_{jk}|^2}-1\right\|_{L^2},
\end{split}
\end{equation*}
and we finally obtain by \eqref{ineq:plus-loin}  
\begin{equation*}
 \left|\frac{d}{dt} \big(\sum_{j,k} \|u_{jk}(t)\|_{L^2}^2\big)^{1/2}\right|
\leq C\mathcal E(t)^{1/2}.
\end{equation*}
The conclusion follows.

\end{proof}

\medskip

We are now able 
to control the evolution of 
$\mathcal E(t)$ and to complete the proof of Theorem \ref{thm:main}.
First we recall that by Proposition \ref{prop:I},
\begin{equation*}
\frac{1}{2}\left(\|v(t)\|_{L^2}^2+\|w(t)\|_{L^2}^2\right)-\tilde{\mathcal{E}_0}\leq
 \mathcal{E}(t)\leq \tilde{\mathcal{E}_0}+\frac{1}{2}\left(\|v(t)\|_{L^2}^2+\|w(t)\|_{L^2}^2\right)
\end{equation*}
so in particular 
\begin{equation*}\begin{split}
 \mathcal{E}(t)+\|v(t)\|_{L^2}^2+\|w(t)\|_{L^2}^2
\leq C\tilde{\mathcal{E}_0}\quad \text{on}\quad  [0,T].
\end{split}\end{equation*}. 

Next, in view of Proposition \ref{prop:est-U} we have
\begin{equation*}\begin{split}\mathcal E(t)
&\leq  \tilde{\mathcal E_0}+(\|v(t)\|_{L^2}+\|w(t)\|_{L^2})^2
\leq \tilde{\mathcal E_0}+2(\|v(0)\|_{L^2}+\|w(0)\|_{L^2})^2\\
&+Ct^2\sup_{s\in [0,t]}\max_{j, k}\|u_{jk}(s)\|_{L^2}
\mathcal E(s)^{1/2}\left(\mathcal{E}(s)^{1/2}+\|v(s)\|_{L^2}+\|w(s)\|_{L^2}\right)^2\\
&\leq 9\tilde{\mathcal{E}_0}+Ct^2\sup_{s\in [0,t]}\max_{j, k}\|u_{jk}(s)\|_{L^2}\tilde{\mathcal E_0}^{3/2}
\end{split}\end{equation*}
and finally by Proposition \ref{prop:est-uj} 
$$\mathcal E(t)\leq 9\tilde{\mathcal E_0}+Ct^2 \max_{j, k}\|u_{jk,0}\|_{L^2}
\tilde{\mathcal E_0}^{3/2}+Ct^3 
\tilde{\mathcal E_0}^2.$$
Setting  $t=T$ in the above inequality and recalling that  $\mathcal{E}(T)=10\tilde{\mathcal{E}_0}$, we infer that
\begin{equation*}
1\leq C
t^2 \max_{j, k}\|u_{jk,0}\|_{L^2}\tilde{\mathcal E_0}^{1/2}+Ct^3\tilde{\mathcal{E}_0}. 
\end{equation*}
We conclude that $T$ is larger than
$$C\min\left\{\frac{1}{\tilde{\mathcal E_0}^{1/4}\max_{j, k} \|u_{jk,0}\|_{L^2}^{1/2}},
\frac{1}{\tilde{\mathcal E_0}^{1/3}}\right\},$$
as we wanted. This concludes the proof of Theorem \ref{thm:main}.

\section{Proof of Theorem \ref{thm:TW}}

\label{sec:tw}

Before proving Theorem \ref{thm:TW} we start with some preliminary computations. 
We mainly follow
 the Appendix of \cite{Gr}. Assume that $v$ is a $\mathcal{C}^\infty$ small energy
solution to Eq. \eqref{eq:TW} such that $v'$ vanishes at infinity. We set $$\eta=1-|v|^2,$$ then $\eta$ vanishes 
at infinity. We decompose $v$ into its real and 
imaginary parts, $v=v_1+iv_2$. Eq. \eqref{eq:TW} gives then the system
$$\left\{\begin{array}{c}\displaystyle-cv_2'+v_1''+\omega\frac{v_1}{v_1^2+v_2^2}-\omega v_1=0,
\\ \displaystyle cv_1'+v_2''+\omega\frac{v_2}{v_1^2+v_2^2}-\omega v_2=0.\end{array}\right.$$
By substracting the first equation multiplied by $v_2$ from the second one multiplied by $v_1$ 
$$(v_1v_2'-v_1'v_2-\frac{c}{2}\eta)'=0,$$
so since $v$ has finite energy we can integrate from infinity and get
\begin{equation}
 \label{eq:TW2}v_1v_2'-v_1'v_2=\frac{c}{2}\eta.\end{equation}
Next we add the first equation multiplied by $v_1'$ to the second one multiplied by $v_2'$, 
$$(v_1'^2+v_2'^2+\omega\ln (v_1^2+v_2^2)-\omega(v_1^2+v_2^2))'=0,$$
so
\begin{equation}
 \label{eq:TW3}
|v'|^2=-\omega\ln(1-\eta)-\omega\eta.
\end{equation}
Finally,  in view of \eqref{eq:TW2} and \eqref{eq:TW3} we can compute
\begin{equation*}
 \begin{split}
\eta''&=-2|v'|^2-2(v_1v_1''+v_2v_2'')\\
&=-2|v'|^2-2v_1(cv_2'-\omega\frac{v_1}{v_1^2+v_2^2}+\omega v_1)-2v_2(-cv_1'-\omega\frac{v_2}{v_1^2+v_2^2}+\omega v_2)\\
&=-2|v'|^2-2c(v_1v_2'-v_1'v_2)+2\omega-2\omega(v_1^2+v_2^2)\\
&=2\omega\ln (1-\eta)+4\omega \eta-c^2\eta. 
 \end{split}
\end{equation*}

So we find
\begin{equation}
 \label{eq:eta}
\eta''-2\omega \ln(1-\eta)+(c^2-4\omega)\eta=0.
\end{equation}
Multiplying by $\eta'$ and integrating we obtain
\begin{equation*}
(\eta')^2+(c^2-4\omega)\eta^2-4\omega \big( (\eta-1)\ln(1-\eta)-\eta\big)=0,
\end{equation*}
which is satisfied if $\eta$ verifies 
\begin{equation}
 \label{eq:eta'}
\eta'=\alpha\Big(-(c^2-4\omega)\eta^2+4\omega\big( (\eta-1)\ln(1-\eta)-\eta\big) \Big)^{1/2},\quad\alpha=\alpha(\sigma)=\pm 1.
\end{equation}

We now turn to the proof of Theorem \ref{thm:TW}.
From now on we look for 
solutions such that $\eta$ is sufficiently small on the whole of $\R$ and for which the right hand side in 
\eqref{eq:eta'} makes sense.  
We introduce
\begin{equation*}
 a(\eta)=-(c^2-4\omega)\eta^2+4\omega\big( (\eta-1)\ln(1-\eta)-\eta\big).
\end{equation*}
For $0<\eta<1$, we perform a 
Taylor expansion for $a$,
\begin{equation*}
 \begin{split}
  a(\eta)
&=(2\omega-c^2)\eta^2-2\omega \frac{\eta^3}{3}-4\omega\sum_{k\geq 4} \frac{\eta^k}{k(k-1)}\end{split}\end{equation*}
therefore\begin{equation*}\begin{split}b(\eta)\equiv\frac{a(\eta)}{\eta^2}=
2\omega-c^2-2\omega \frac{\eta}{3}+r(\eta)
 \end{split}
\end{equation*} with $r(\eta)=o(\eta)\leq 0$ such that $r'(\eta)=O(\eta)$. Let us set
$$\sigma_0=\frac{2\omega-c^2}{\frac{2\omega}{3}}>0,$$
then $b(\sigma_0)\leq 0$. Since on the other hand $b(0)>0$, there exists $\sigma_1\in(0,\sigma_0]$ such that $b(\sigma_1)=0$. Moreover, since for $\eta\in[0,\sigma_0]$ we have 
$b'(\eta)=-\frac{2\omega}{3}+r'(\eta)\leq -\frac{2\omega}{3}+C(2\omega-c^2)<0$ for $2\omega-c^2$ sufficiently small, 
we infer that $b$ is strictly decreasing on $[0,\sigma_0]$ and therefore $\sigma_1$ is the unique zero of $a$ on $]0,\sigma_0]$.

Next, we fix a small parameter $\eps>0$ and we consider the ODE
\begin{equation*}
 \begin{cases}
\displaystyle  y'_\eps(\sigma)=-\sqrt{a(y_\eps(\sigma))},\\
y_\eps(0)=\sigma_1-\eps. 
\end{cases}
\end{equation*}
Since $\sqrt{a}$ is Lipschitz on $[0,x_1-\eps/2)$ we can find a unique maximal solution  on some interval 
$I$ containing the origin. We claim that $\sup I=+\infty.$  We show first
that $0<y_\eps<\sigma_1-\eps$ on $I\cap[0,\infty)$. Indeed, $y_\eps$ is strictly decreasing on $I\cap[0,\infty)$. Assume by contradiction 
that there exists $\overline{\sigma}$ such that $y_\eps(\overline{\sigma})=0$ and $y_\eps>0$ on 
$[0,\overline{\sigma})$. We recall that $b(y)\sim 2\omega-c^2$ when $y\to 0$. Therefore 
\begin{equation*}
 y_\eps'(\sigma)\geq -2\sqrt{2\omega-c^2}y_\eps(\sigma) \quad \text{for } \sigma\in [\overline{\sigma}-\delta,\overline{\sigma}]  
\end{equation*}
with $\delta$ small. Integrating the differential inequality above yields
\begin{equation*}
 y_\eps(\sigma)\geq y_\eps(\overline{\sigma}-\delta)\exp(-2\sqrt{2\omega-c^2}(\sigma-\overline{\sigma}+\delta))\quad \text{on }
[\overline{\sigma}-\delta,\overline{\sigma}],
\end{equation*}
which contradicts the fact that $y_\eps(\overline{\sigma})=0$. Next, since 
$y\mapsto \sqrt{a(y)}$ is Lipschitz and bounded on $[0,\sigma_1-\eps]$ the maximal solution $y_\eps$ 
exists on $[0,\infty)$ which
proves the claim.

We next let $\eps\to 0$. Noting that $y_\eps$ and $y'_\eps$ are uniformly bounded on $[0,\infty)$ we can
pass to the limit to find a solution\footnote{We do not claim that such a solution is unique or maximal.} 
$y$ to the ODE
\begin{equation*}
 \begin{cases}
\displaystyle y'=-\sqrt{a(y)},\quad \sigma\geq 0\\
\displaystyle y(0)=\sigma_1.  
 \end{cases}
\end{equation*}
 We finally set
\begin{equation*}
\eta(\sigma)=y(\sigma)\quad \text{for }\sigma\in [0,+\infty) \quad \text{and } \eta(-\sigma)=\eta(\sigma)=y(\sigma)\quad \text{for }\sigma\in (-\infty,0].
\end{equation*}
Thanks to $\eta(0)=\sigma_1$ and $a(\sigma_1)=0$ we check that $\eta\in C^\infty(\R)$ is a solution of the ODE \eqref{eq:eta}.
Moreover, by the same kind of arguments as before we have $\eta\to 0$, hence $\eta'(\sigma)\sim -\sqrt{2\omega-c^2}\eta(\sigma)$
as $\sigma\to \infty$, which yields the exponential decay $\eta(\sigma)\leq C_\delta 
\eta(0)\exp(-(\sqrt{2\omega-c^2}-\delta)|\sigma|)$ for all $0<\delta<\sqrt{2\omega-c^2}$.

We  complete the proof of Theorem \ref{thm:TW} by looking for a solution of the form
\begin{equation}\label{def:solution-TW}v=\sqrt{1-\eta}\exp(i\theta).\end{equation}
Then according to \eqref{eq:TW2} we must have
\begin{equation}\label{eq:theta'}
 (1-\eta)\theta'=\frac{c\eta}{2}
\end{equation}
(note that in particular $\theta$ is an  increasing function on $\R$).
Therefore for
$
 \theta(\sigma)=\theta_0+\int_0^\sigma\frac{c\eta}{2(1-\eta)}\,d\tau
$
where $\theta_0\in \R$, then 
\begin{equation*}
 |\theta(+\infty)-\theta(-\infty)|\leq \frac{C\eta(0)}{\sqrt{2\omega-c^2}}\leq C\sqrt{2\omega-c^2}.
\end{equation*}
Also, the map defined by \eqref{def:solution-TW} is a solution to \eqref{eq:TW}. It only remains to 
show that $v$ has finite energy.
This clearly holds in view of the exponential decay of $\eta$, of $\eta'$ (by \eqref{eq:eta'}) 
and of $\theta'$ (by \eqref{eq:theta'}) at infinity. Moreover in view of \eqref{eq:theta'} we obtain
\begin{equation*}
 \mathcal{E}(v)\leq C\|\eta\|_{H^1}^2\leq C(2\omega-c^2)^{3/2}
\end{equation*}
and the conclusion of Theorem \ref{thm:TW} follows.

\section{Proof of Theorem \ref{thm:blup}}
\label{sec:blup}
Under the hypothesis of Theorem \ref{thm:blup}, the angular speed of the configuration $(X_j)_j$ is $\omega=0$ so if we set
$$\Psi_j(t,\sigma)=X_j(t)\Phi(t,\sigma)$$
a solution of Syst. \eqref{syst:interaction}, we have shown in Section \S\ref{sec:sym} that $\Phi$ has to solve the linear Schr\"odinger equation,
$$i\dt \Phi+\partial_\sigma^2 \Phi=0.$$
Since the linear evolution of a Gaussian $G_0(\sigma)=e^{-\sigma^2}$ is 
$$e^{it\partial_\sigma^2}G_0(t,\sigma)=\frac{e^{-\frac{\sigma^2}{1+4it}}}{\sqrt{1+4it}},$$
it follows that the linear evolution of 
$$\Phi_0(\sigma)=1-\frac{e^{-\frac{\sigma^2}{1-4i}}}{\sqrt{1-4i}}$$
is precisely
$$\Phi(t,\sigma)=1-\frac{e^{-\frac{\sigma^2}{1-4i(1-t)}}}{\sqrt{1-4i(1-t)}}.$$
We notice that $\Phi(t,\sigma)\overset{|\sigma|\rightarrow\infty}{\longrightarrow} 1$ for $t\in [0,1]$, and for $t\in[0,1[$
$$|\Phi(t,\sigma)|> 1-\frac{1}{\sqrt{1+16(1-t)^2}}>0.$$
On the other hand we have 
$$\Phi(1,\sigma)=1-e^{-\sigma^2},$$
so $\sigma=0$ is a vanishing point at $t=1$ and Theorem  \ref{thm:blup} follows.

\end{document}